\documentclass[11pt]{amsart}

\usepackage{amsmath}
\usepackage{amsthm}
\usepackage{amssymb}
\usepackage{amsrefs}
\usepackage{todonotes}
\usepackage{enumitem}

\usepackage[colorlinks=true]{hyperref}

\usepackage[mathlines,modulo]{lineno}
%\linenumbers

\theoremstyle{plain}
\newtheorem{theorem}{Theorem}[section]
\newtheorem{proposition}[theorem]{Proposition}
\newtheorem{lemma}[theorem]{Lemma}
\newtheorem{corollary}[theorem]{Corollary}

\theoremstyle{definition}
\newtheorem{definition}[theorem]{Definition}
\newtheorem{example}[theorem]{Example}

\newcommand{\formula}[1]{\ensuremath{\text{\fontfamily{cmss}\selectfont \textup{#1}}}}
\newcommand{\indep}[1]{\ensuremath{\formula{Indep}[#1]}}
\newcommand{\sing}[1]{\ensuremath{\formula{Sing}[#1]}}
\newcommand{\sfmax}{\ensuremath{\formula{max}}}
\newcommand{\sfmin}{\ensuremath{\formula{min}}}

\newcommand{\dash}{\nobreakdash-\hspace{0mm}}
\newcommand{\cmso}{\ensuremath{\mathit{CMS}_{0}}}
\newcommand{\mso}{\ensuremath{\mathit{MS}_{0}}}
\newcommand{\mstwo}{\ensuremath{\mathit{MS}_{2}}}
\newcommand{\ds}{\displaystyle}

\title[Matroid transductions]{Monadic transductions and definable classes of matroids}

\author[Jowett]{Susan Jowett}
\author[Mayhew]{Dillon Mayhew}
\author[Mo]{Songbao Mo}
\author[Tuffley]{Christopher Tuffley}

\sloppy

\begin{document}

\begin{abstract}
A transduction provides us with a way of using the monadic second-order language of a structure to make statements about a derived structure.
Any transduction induces a relation on the set of these structures.
This article presents a self-contained presentation of the theory of transductions for the monadic second-order language of matroids.
This includes a proof of the matroid version of the Backwards Translation Theorem, which lifts any formula applied to the images of the transduction into a formula which we can apply to the pre-images.
Applications include proofs that the class of lattice-path matroids and the class of spike-minors can be defined by sentences in monadic second-order logic.
\end{abstract}

\maketitle

\section{Introduction}  

The monadic second-order logic of discrete objects derives its importance from connections with the theory of algorithms.
This is exemplified by Courcelle's foundational theorem~\cite{MR1042649} and its descendants \cites{MR2066603, MR4444152}.
Transductions play an essential role in the study of discrete structures and their monadic second-order logics.
They are central to the work of Boja\'{n}czyk and Pilipczuk~\cite{MR3776760} that resolved a long-standing conjecture of Courcelle's~\cite{MR1042649}.
In the context of the monadic second-order logic of matroids, transductions have been implicitly used by Funk, Mayhew, and Newman~\cites{MR4444152,MR4395073}.

Roughly speaking, a transduction is a way of talking about a derived structure by using a logical language that applies to the original structure.
To illustrate the idea of a transduction, we consider an example.
Suppose that we want to speak about the minors of the graph $G$ by using the monadic second-order logic \mstwo\ for graphs, applied to $G$.
Any minor of $G$ can be produced by contracting a forest $F$ of $G$ and then deleting a set $D$ of edges.
Let $H$ be the minor produced in this way.
Now the vertices of $H$ correspond to equivalence classes of a relation on the vertices of $G$.
Two vertices are related if and only if they are joined by a path of $G[F]$.
There is an \mstwo\ formula that is true for exactly these equivalence classes, so the sets satisfying this formula become the vertices of $H$.
A set of vertices of $H$ is a union of these equivalence classes.
An edge of $H$ is an edge of $G$ that is not in $F\cup D$.
An edge is incident with a vertex of $H$ if and only if that edge of $G$ is incident with a vertex in the equivalence class.
In this way we see that by quantifying over all appropriate choices of $F$ and $D$, we are able to use the \mstwo\ language of $G$ to make statements about all the minors of $G$.
We view this transduction as a relation taking each graph to its minors.

Transductions are important because any sentence that applies to the images of the transduction can be lifted into a formula that applies to the pre-images.
We illustrate this with another example: the transduction that takes any graph to its connected components.
The vertex set of a connected component is an equivalence class under a relation on the vertices of $G$.
(In this case, two vertices are related if there is a walk of $G$ between them.)
Once again, there is a formula in \mstwo\ that is satisfied by the equivalence classes of this relation.
If $H$ is such an equivalence class, we can make statements about $H$ by letting the vertices be the members of $H$ and letting the edges be the edges of $G$ that join two vertices in $H$.
By quantifying over all equivalence classes, we are able to make statements about all the connected components of $G$.
To see why this is useful, imagine that some property of graphs is closed under disjoint unions, and there is an \mstwo\ sentence that characterises the property for connected graphs.
We apply the aforementioned lifting operation to this sentence.
By appropriately quantifying over the formula that we obtain, we derive a sentence that is true for $G$ if and only if each connected component of $G$ has the property in question.
This application of transductions shows us that if we want to construct a sentence characterising the graph property, it is sufficient to find a sentence that does so for connected graphs.

The idea we have just described is the content of the \emph{Backwards Translation Theorem}, which is the key tool used in~\cite{MR3776760}.
In Theorem~\ref{BTT} we prove a version of the Backwards Translation Theorem that is specialised to matroids.
This theorem formalises the process of lifting a formula into a new formula that applies to pre-images.

Transductions and the Backwards Translation Theorem are covered extensively in the colossal work of Courcelle and Engelfreit~\cite{MR2962260}.
Because their work is developed at a high level of abstraction it is often useful to consider versions of transductions that apply to more specific contexts.
Thus, a principal aim of the present work is to set out the theory of transductions for the (counting) monadic second-order logic of matroids in a self-contained exposition.

The paper is organised as follows.
In Section~\ref{sect:prelims} we gather together the fundamental definitions in the monadic theory of matroids and collate useful formulas.
In Section~\ref{sect:transductions} we motivate and define monadic transductions for matroids.
We also list several natural and useful matroid transductions and prove the Backwards Translation Theorem.
In Section~\ref{sect:operations} we collect tools that we can use to prove definability results for classes of matroids.
For example, we prove that a minor-closed class of matroids is monadically definable if and only if the class of excluded minors can be monadically defined (Proposition~\ref{definable-excluded}).
Finally, in  Section~\ref{sect:classes}, we apply ideas from the rest of the paper to prove that some natural matroid classes are definable.
In particular we show that there are monadic characterisations of the class of lattice-path matroids~\cite{MR2018421} and the class of spikes (introduced in~\cite{MR1399683}).
The former class is of interest in this context because it is monadically definable despite the fact that it has infinitely many excluded minors~\cite{MR2718679}.
(Any minor-closed class with finitely many excluded minors is monadically definable by an observation originally due to Hlin\v{e}n\'{y}~\cite{MR2081597}.)

We remark here that our definition of transduction does not use the copying and colouring operations used in~\cite{MR3776760}, so perhaps we should more properly use the term \emph{interpretation}.
We think our use of ``transduction" will not cause any confusion.

\section{Preliminaries}
\label{sect:prelims}

A \emph{set-system} is a pair $(E,\mathcal{I})$, where $E$ is a finite set and $\mathcal{I}$ is a collection of subsets of $E$.
We refer to $E$ as the \emph{ground set} and the members of $\mathcal{I}$ as \emph{independent sets}.

We introduce the logical language \cmso.
The variables of the language are $X_{1},X_{2},X_{3},\ldots$, where each variable is going to be interpreted as a subset of a ground set.
We freely use other symbols such as $T$, $U$, $V$, $W$, $X$, $Y$, and $Z$  to represent members of this family of variables.
We have the predicate symbols $\indep{\cdot}$, $\subseteq$, and $|\cdot|_{p,q}$, where $p$ and $q$ are any non-negative integers such that $p < q$.
The \emph{formulas} of \cmso\ are defined recursively.
Each formula contains a number of variables, and each such variable is either \emph{free} or \emph{bound} in the formula.
The following are \emph{atomic formulas}:
\begin{enumerate}[label = \textup{(\roman*)}]
\item the unary formula $\indep{X}$, which has the single (free) variable $X$,
\item the binary formula $X\subseteq Y$, which has $X$ and $Y$ as (free) variables, and
\item the unary formula $|X|_{p,q}$, where $p$ and $q$ are non-negative integers such that $p<q$.
This formula has $X$ as its single (free) variable.
\end{enumerate}
Now we define non-atomic formulas.
If $\varphi$ is a formula, then so is $\neg \varphi$, and these two formulas have the same variables.
A variable is free in $\neg\varphi$ if and only if it is free in $\varphi$.
If $X$ is a free variable in the formula $\varphi$, then $\exists X\varphi$ is a formula with the same variables as $\varphi$.
We say that $\exists$ is the \emph{existential quantifier}.
A variable is free in $\exists X\varphi$ if and only if it is free in $\varphi$, with the exception of $X$ which is a bound variable of $\exists X\varphi$.
Finally, if $\varphi_{1}$ and $\varphi_{2}$ are formulas, and no variable is free in one of $\varphi_{1}$ and $\varphi_{2}$ while being bound in the other, then $\varphi_{1}\land \varphi_{2}$ is a formula.
Since we can always rename bound variables, the constraint on variables imposes no difficulty.
The variables of $\varphi_{1}\land \varphi_{2}$ are those in either $\varphi_{1}$ or $\varphi_{2}$, and the free variables of $\varphi_{1}\land \varphi_{2}$ are exactly those variables that are free in either $\varphi_{1}$ or $\varphi_{2}$.

We will use parentheses freely to clarify the construction of formulas.
A formula with no free variables is a \emph{sentence}.

\begin{definition}
The collection of formulas constructed in this way is \emph{counting monadic second-order logic} (\cmso).
The collection of formulas that do not use any atomic formula of the form $|\cdot|_{p,q}$ is \emph{monadic second-order logic} (\mso).
\end{definition}

\subsection{Shorthands}

We use various pieces of shorthand notation.
The formula $X=Y$ means $(X\subseteq Y)\land (Y\subseteq X)$, and $X\ne Y$ means $\neg(X=Y)$.
We write $X\nsubseteq Y$ for $\neg(X\subseteq Y)$.
If $\varphi_{1}$ and $\varphi_{2}$ are formulas with no variable bound in one and free in the other, then $\varphi_{1}\lor\varphi_{2}$ means $\neg(\neg \varphi_{1} \land \neg \varphi_{2})$, and $\varphi_{1}\to \varphi_{2}$ means $(\neg \varphi_{1})\lor \varphi_{2}$, while $\varphi_{1}\leftrightarrow\varphi_{2}$ means $(\varphi_{1}\to\varphi_{2})\land (\varphi_{2}\to\varphi_{1})$.
If $X$ is a free variable in the formula $\varphi$, then $\forall X\varphi$ is shorthand for $\neg(\exists X (\neg\varphi))$.
We call $\forall$ the \emph{universal quantifier}.

We write $\varphi[X_{i_{1}},\ldots, X_{i_{s}}]$ to indicate that $\varphi$ is a \cmso\dash formula and that its free variables are $X_{i_{1}},\ldots, X_{i_{s}}$.
In this case,
\[\varphi[Z_{i_{1}},\ldots, Z_{i_{s}}]\]
denotes the formula that we obtain from $\varphi$ by replacing every occurrence of the variable $X_{i_{j}}$ with the variable $Z_{i_{j}}$, for each $j$.
If $\mathcal{X}$ is the tuple $(X_{i_{1}},\ldots, X_{i_{s}})$ we may write $\varphi[\mathcal{X}]$ instead of $\varphi[X_{i_{1}},\ldots, X_{i_{s}}]$.
If $\varphi$ has $X$ as a free variable, then we may write $\varphi[X\rightharpoonup Z]$ to denote the formula obtained from $\varphi$ by replacing each occurrence of $X$ with $Z$.

\subsection{Interpretations}

Let $\varphi$ be an \cmso\dash formula and let $M=(E,\mathcal{I})$ be a set-system.
An \emph{interpretation} of $\varphi$ is a function, $\theta$, from the set of free variables of $\varphi$ to subsets of $E$.
We treat every function as a set of ordered pairs.
We will blur the distinction between a variable and its image under an interpretation when it is convenient to do so.

We recursively define what it means for $(M,\theta)$ to \emph{satisfy} $\varphi$.
If $\varphi$ is the atomic formula $\indep{X}$, then $(M,\theta)$ satisfies $\varphi$ if and only if $\theta(X)\in\mathcal{I}$.
If $\varphi$ is the atomic formula $X\subseteq Y$, then $(M,\theta)$ satisfies $\varphi$ if and only if $\theta(X)\subseteq \theta(Y)$.
Next, if $\varphi$ is $|X|_{p,q}$, then $(M,\theta)$ satisfies $\varphi$ if and only if $|\theta(X)|$ is congruent to $p$ modulo $q$.

Now we move to formulas that are not atomic.
Assume that $\varphi=\neg\psi$, for some formula $\psi$.
Then $(M,\theta)$ satisfies $\varphi$ if and only if $(M,\theta)$ does not satisfy $\psi$.
Next let $\varphi$ be $\varphi_{1}\land \varphi_{2}$.
For $i=1,2$, let $\theta_{i}$ be the restriction of $\theta$ to the free variables of $\varphi_{i}$.
Then $(M,\theta)$ satisfies $\varphi$ if and only if $(M,\theta_{1})$ satisfies $\varphi_{1}$ and $(M,\theta_{2})$ satisfies $\varphi_{2}$.
Finally, assume that $\varphi=\exists X\psi$.
Then $(M,\theta)$ satisfies $\varphi$ if and only if there is a subset, $F\subseteq E$, such that the interpretation $(M,\theta\cup\{(X,F)\})$ satisfies $\psi$.

Instead of saying that $(M,\theta)$ satisfies $\varphi$, we may say that $\varphi$ is satisfied by $M$ under the interpretation $\theta$.
If $\varphi$ is a sentence, meaning it has no free variables, then we will say that $M$ satisfies $\varphi$.
If $\varphi$ has free variables $X_{i_{1}},\ldots, X_{i_{s}}$, and $\theta$ is a satisfying assignment taking these variables to subsets $Y_{i_{1}},\ldots, Y_{i_{s}}$, then we may say that $Y_{i_{1}},\ldots, Y_{i_{s}}$ satisfy $\varphi$.

\begin{definition}
Let $\mathcal{M}$ be a class of set-systems.
If there is an \mso/\cmso\dash sentence $\varphi$ such that $M=(E,\mathcal{I})$ satisfies $\varphi$ if and only $M$ is in $\mathcal{M}$, then $\mathcal{M}$ is \emph{(\mso/\cmso)-definable}.
\end{definition}

\subsection{Matroid formulas.}

The next result is proved in~\cite{MR3803151}.
    
\begin{proposition}
There exists an \mso\dash sentence \formula{Matroid} that is satisfied by a set-system $M=(E,\mathcal{I})$ if and only if $\mathcal{I}$ is the collection of independent sets of a matroid on the ground set $E$.
\end{proposition}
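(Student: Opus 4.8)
The plan is to take \formula{Matroid} to be the conjunction of three \mso\dash sentences, asserting respectively that $\mathcal{I}$ is non-empty, that $\mathcal{I}$ is closed under taking subsets, and that the inclusionwise maximal members of $\mathcal{I}$ satisfy the single-element basis-exchange property. We deliberately route through basis exchange rather than through the more familiar augmentation (rank) axiom: the augmentation axiom quantifies over independent sets $A,B$ with $|A| < |B|$, but \mso\ has no counting predicate at all, and it cannot compare the cardinalities of two arbitrary, possibly incomparable, subsets of $E$. The basis-exchange axioms, by contrast, only ever relate a set to a modification of itself by a single element, and singletons and such one-element modifications are readily \mso\dash definable; this is precisely what makes the cardinality obstacle disappear.

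Concretely, I would first record some routine auxiliary \mso\dash formulas. The empty set is the unique $X$ with $X \subseteq Y$ for every $Y$. A \emph{singleton} is a non-empty set with no proper non-empty subset. For a variable $X$ constrained to be a singleton, both ``$C = A \cup X$'' and ``$C = A \setminus X$'' are expressible using $\subseteq$ alone. Consequently ``$B$ is a maximal independent set'', namely $\indep{B} \wedge \neg\exists B'\,(B \subseteq B' \wedge B \ne B' \wedge \indep{B'})$, is an \mso\dash formula, which I will call $\formula{Basis}[B]$. The three conjuncts of \formula{Matroid} are then: (i) $\exists X\,\indep{X}$, which together with (ii) is equivalent to $\emptyset \in \mathcal{I}$; (ii) $\forall X \forall Y\,\bigl((X \subseteq Y \wedge \indep{Y}) \to \indep{X}\bigr)$; and (iii) the sentence expressing that for all $B_{1}, B_{2}$ satisfying $\formula{Basis}$ and every singleton $e \subseteq B_{1}$ with $e \nsubseteq B_{2}$, there is a singleton $f \subseteq B_{2}$ with $f \nsubseteq B_{1}$ and $\formula{Basis}[(B_{1} \setminus e) \cup f]$. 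None of these formulas uses a predicate of the form $|\cdot|_{p,q}$, so \formula{Matroid} is indeed an \mso\dash sentence.

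It then remains to prove the equivalence. If $M = (E,\mathcal{I})$ is a matroid, then $\mathcal{I}$ is non-empty and hereditary, the sets satisfying $\formula{Basis}$ are exactly the bases of $M$, and basis exchange is one of the classical matroid axioms, so $M$ satisfies \formula{Matroid}. Conversely, suppose $M$ satisfies \formula{Matroid} and let $\mathcal{B}$ be the collection of maximal members of $\mathcal{I}$. Since $E$ is finite and $\mathcal{I} \ne \emptyset$, $\mathcal{B}$ is non-empty; by conjunct (iii) it satisfies the single-element basis-exchange axiom; so by the standard basis-axiom characterisation of matroids, $\mathcal{B}$ is the set of bases of a matroid $N$ on $E$, whose independent sets are precisely the subsets of members of $\mathcal{B}$. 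One then checks that $\mathcal{I}$ equals this family: the inclusion ``$\subseteq$'' holds because $E$ is finite, so every member of $\mathcal{I}$ extends to a maximal one; the inclusion ``$\supseteq$'' holds because $\mathcal{I}$ is hereditary by conjunct (ii). Hence $M = N$ is a matroid. I expect the one genuinely non-mechanical ingredient to be the appeal to the basis-axiom characterisation of matroids, together with the remark that a finite hereditary set-system is the downward closure of its maximal members; everything else is the \mso\dash bookkeeping of the auxiliary formulas, which the monadic language of set-systems handles comfortably once singletons and one-element edits are available.
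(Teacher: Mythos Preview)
Your proposal is correct. The paper does not actually prove this proposition: it merely cites the result from Mayhew, Newman, and Whittle~\cite{MR3803151}. You have instead supplied a self-contained argument, and it goes through. The choice to route through the basis-exchange axiom rather than the augmentation axiom is the right one, for exactly the reason you give: basis exchange involves only single-element edits of a set, and these are \mso\dash expressible once singletons are available, whereas the raw augmentation axiom requires comparing $|A|$ with $|B|$ for incomparable sets, which neither \mso\ nor even \cmso\ can do. Your reconstruction of $\mathcal{I}$ from its maximal members---using finiteness of $E$ to extend every independent set to a maximal one, and hereditariness to pass back down---is the standard and correct closing step. The only remark I would add is that when you write $\formula{Basis}[(B_{1}\setminus e)\cup f]$ you are implicitly introducing an auxiliary existentially quantified variable $C$ together with the formulas expressing $C=(B_{1}\setminus e)\cup f$; this is clearly what you intend, but it is worth making explicit in a final write-up.
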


Let $\formula{Empty}[X]$ be the formula
\[
\forall Z\ (Z\subseteq X\to Z=X).
\]
Note that $X$ is the only free variable of $\formula{Empty}[X]$, and that a satisfying interpretation must take $X$ to a subset that is equal to its only subset: that is, the empty set.
Let $\formula{Sing}[X]$ be the formula
\[
\neg \formula{Empty}[X]\land \forall Z\ (Z\subseteq X\to (\formula{Empty}[Z]\lor Z=X)).
\]
Then $\formula{Sing}[X]$ is satisfied by precisely the interpretations that take $X$ to a set of cardinality one.

Let $\formula{Union}[X,Y,Z]$ be the formula
\[
X\subseteq Z\land Y\subseteq Z\land (\forall W\ (X\subseteq W\land Y\subseteq W\to Z\subseteq W)).
\]
Note that $\formula{Union}[X,Y,Z]$ is satisfied if and only if $Z$ is interpreted as the union of the interpretations of $X$ and $Y$.
Similarly, let $\formula{Intersection}[X,Y,Z]$ be the formula
\[
Z\subseteq X\land Z\subseteq Y\land (\forall W\ ((W\subseteq X\land W\subseteq Y)\to W\subseteq Z).
\]
Then $\formula{Intersection}[X,Y,Z]$ is satisfied if and only if $Z$ is interpreted as the intersection of the interpretations of $X$ and $Y$.
Let $\formula{Disjoint}[X,Y]$ be the formula
\[
\forall Z\ ((Z\subseteq X \land Z\subseteq Y)\to \formula{Empty}[Z]).
\]
Then $\formula{Disjoint}[X,Y]$ is satisfied if and only if $X$ and $Y$ are interpreted as disjoint sets.
Let $\formula{Bipartition}[X,Y]$ be
\[\formula{Disjoint}[X,Y]\land \forall W\ (\formula{Sing}[W]\to W\subseteq X\lor W\subseteq Y).\]

Let $\varphi[X]$ be a formula with a single free variable, $X$.
Let $M=(E,\mathcal{I})$ be a set-system and let $\mathcal{D}$ be the collection of subsets of $E$ such that $D\subseteq E$ belongs to $\mathcal{D}$ if and only if $\varphi[X]$ is satisfied by the interpretation that takes $X$ to $D$.
Now let $\sfmax_{\varphi}[X]$ be the formula
\[
\varphi[X] \land \forall Y\ ((\varphi[Y] \land X\subseteq Y)\to Y=X).
\]
Then $\sfmax_{\varphi}[X]$ is satisfied by the interpretation taking $X$ to $D$ if and only if $D\in \mathcal{D}$ and $D$ is not properly contained in any member of $\mathcal{D}$; that is, if and only if $D$ is a maximal member of $\mathcal{D}$.
In the same way,
\[
\sfmin_{\varphi}[X]=\varphi[X] \land \forall Y\ ((\varphi[Y] \land Y\subseteq X)\to Y=X)
\]
is satisfied by the minimal members of $\mathcal{D}$.

We define $\formula{Basis}[X]$ to be the formula $\sfmax_{\formula{Indep}}[X]$.
Let $\formula{Dep}[X]$ be $\neg\formula{Indep}[X]$, and let \formula{Circuit} be $\sfmin_{\formula{Dep}}[X]$.
If $M=(E,\mathcal{I})$ is a matroid then $\formula{Basis}[X]$ will be satisfied by exactly those interpretations that take $X$ to a basis of $M$, and $\formula{Circuit}[X]$ will be satisfied by exactly those interpretations that take $X$ to a circuit.

We let $\formula{Coindep}[X]$ be the formula $\exists B\ (\formula{Basis}[B]\land \formula{Disjoint}[B,X])$.
Assuming that $M$ is a matroid, then $X$ satisfies $\formula{Coindep}[X]$ if and only if the complement of $X$ contains a basis: that is, if and only if $X$ is coindependent in $M$.
We set $\formula{Codep}[X]$ to be $\neg\formula{Coindep}[X]$.
We also let $\formula{Cocircuit}[X]$ be $\formula{min}_{\formula{Codep}}[X]$, so that $\formula{Cocircuit}[X]$ is satisfied by the cocircuits of $M$.

Next we design the formula $\formula{Flat}[X]$ so that when $M$ is a matroid, the formula will be satisfied when $X$ is a flat.
We can achieve this by making $\formula{Flat}[X]$ be
\begin{multline*}
\forall C\ (\formula{Circuit}[C]\land \exists Y\ (\formula{Sing}[Y]\land Y\subseteq C\land Y\nsubseteq X))\to\\
\exists Z\ (\formula{Sing}[Z]\land Z\subseteq C\land Z\nsubseteq X\land Z\ne Y)).
\end{multline*}
This says that there is no circuit that contains exactly one element not in $X$, which is equivalent to $X$ being a flat.
By replacing the $\formula{Circuit}$ formula in $\formula{Flat}$ with $\formula{Cocircuit}$, we obtain the formula $\formula{Coflat}[X]$, which will be satisfied when $X$ is a flat in the dual matroid.
The formula $\formula{Spanning}[X]$ is $\exists B\ (\formula{Basis}[B]\land B\subseteq X)$ and $\formula{Hyperplane}[X]$ is $\formula{max}_{\neg\formula{Spanning}}[X]$.

If $X$ and $Y$ are disjoint sets in a matroid $M$, we say that $X$ and $Y$ are \emph{skew} if there is no circuit of $M$ contained in $X\cup Y$ that contains elements from both $X$ and $Y$.
There is a corresponding formula $\formula{Skew}[X,Y]$ which is equal to
\begin{multline*}
\formula{Disjoint}[X,Y]\land \neg\exists C\ (\formula{Circuit}[C]\land \neg\formula{Disjoint}[C,X]\land \\
\neg\formula{Disjoint}[C,Y]\land \forall Z\ (\formula{Union}[X,Y,Z]\to C\subseteq Z)).
\end{multline*}
By replacing $\formula{Circuit}$ with $\formula{Cocircuit}$ we obtain the formula $\formula{Coskew}[X,Y]$.

We define the formula $\formula{Separator}[X]$ to be
\[\neg\formula{Empty}[X]\land\forall C\ (\formula{Circuit}[C]\to (C\subseteq X\lor \formula{Disjoint}[C,X])).
\]
When $M$ is a matroid, this formula is satisfied if and only if there is no circuit that contains elements of both $X$ and the complement of $X$.
This is exactly what it means for $X$ to be a separator of the matroid.
A minimal separator is a connected component, so we set the formula $\formula{Component}[X]$ to be $\formula{min}_{\formula{Separator}}[X]$.
A separator is a $1$\dash separating set.
More generally, the proof of Proposition~3.2 in~\cite{MR4395073} shows that for each positive integer $k$ there is a formula $k$\dash$\formula{Separating}[X]$ such that when $M$ is a matroid, the formula will be satisfied exactly when $X$ is $k$\dash separating, which is to say, when $r(X)+r(E(M)-X)-r(M)<k$.

\section{Transductions}
\label{sect:transductions}

Informally, a transduction is a way of expressing statements about a derived structure by using the logical language associated to the original structure.
For example, we may wish to make statements about a dual matroid $M^{*}$ by using the logical language of the original matroid $M$.
We can do this by replacing every occurrence of ``$X$ is independent" with ``$X$ is disjoint from some basis".
This has the effect of replacing every statement that $X$ is independent with a statement saying that $X$ is coindependent.
If the sentence $\varphi$ defines a class of matroids $\mathcal{M}$, then performing these replacements produces a sentence that defines the dual class $\{M^{*}\colon M\in \mathcal{M}\}$.
In other words, if a class is definable, then so is its dual class.
This is a consequence of the \emph{Backwards Translation Theorem}, which we now start to develop.

\subsection{The definition}

First we formally define a transduction.

\begin{definition}
An \emph{\mso\dash transduction} is a triple of formulas:
\[
\Lambda= (\formula{Domain}, \formula{NewElement}, \formula{NewIndep}),
\]
where the set of free variables of \formula{Domain} is $\mathcal{Z}$ and both \formula{NewElement} and \formula{NewIndep} contain the free variable $T$, and every other free variable of these formulas is in $\mathcal{Z}$.
(We insist that $T$ is not in $\mathcal{Z}$.)
Note that there may be variables in $\mathcal{Z}$ that do not appear in \formula{NewElement} or \formula{NewIndep}.

From $\Lambda$ we derive a relation taking each set-system $M=(E,\mathcal{I})$ to other set-systems.
We first describe this relation informally.
Each interpretation of $\mathcal{Z}$ that satisfies \formula{Domain} will define one output of the relation.
Let $\rho\colon \mathcal{Z}\to 2^{E}$ be such a satisfying interpretation.
Then the output will be $M_{\rho}=(E_{\rho},\mathcal{I}_{\rho})$, where $E_{\rho}$ is the collection of subsets of $E$ that satisfy \formula{NewElement}.
If a union of these subsets satisfies \formula{NewIndep}, then that collection of subsets will be in $\mathcal{I}_{\rho}$.

Let us now make these ideas more formal.
We again assume that $\rho$ is a function from $\mathcal{Z}$ to the power set of $E$ such that $(M,\rho)$ satisfies \formula{Domain}.
Let $\mathcal{X}$ be the set of variables in $\mathcal{Z}$ that appear as free variables in \formula{NewElement}.
We define $E_{\rho}$ to be the set of subsets $F\subseteq E$ such that the function
\[\rho|_{\mathcal{X}}\cup\{(T,F)\}\]
is a satisfying interpretation of \formula{NewElement}.
Note that $E_{\rho}$ is a collection of subsets of $E$.
Let $I$ be any subset of $E_{\rho}$, so that $I$ is also a collection of subsets of $E$.
We let $\Lambda^{-1}_{\rho}(I)$ stand for the union $\cup_{Y\in I} Y$.
Thus $\Lambda^{-1}_{\rho}(I)$ is a subset of $E$.

Now let $\mathcal{Y}$ be the set of variables in $\mathcal{Z}$ that appear as free variables in \formula{NewIndep}.
Let $\mathcal{I}_{\rho}$ be the collection of subsets of $E_{\rho}$ such that $I$ belongs to $\mathcal{I}_{\rho}$ if and only if the interpretation $\rho|_{\mathcal{Y}}\cup\{(T,\Lambda^{-1}_{\rho}(I))\}$ satisfies \formula{NewIndep}.
We define $M_{\rho}$ to be the set-system $(E_{\rho},\mathcal{I}_{\rho})$.

We define the relation $\mathcal{T}_{\Lambda}$ to include all pairs $(M,M_{\rho})$ where $M$ ranges over all set-systems $(E,\mathcal{I})$, and $\rho$ ranges over all satisfying interpretations of \formula{Domain}.
We use $\mathcal{T}_{\Lambda}(M)$ to stand for the collection $\{M_{\rho}\colon (M,M_{\rho})\in \mathcal{T}_{\Lambda}\}$.

These conditions are sufficient to prove that the Backwards Translations Theorem holds for \mso\dash formulas.
But if we want it to apply to formulas that use modular counting predicates, we must impose an additional condition.
If every member of $E_{\rho}$ is a singleton set, for every $M=(E,\mathcal{I})$ and every interpretation $\rho\colon \mathcal{Z}\to 2^{E}$ that satisfies \formula{Domain}, then we say that $\Lambda$ is a \emph{\cmso\dash transduction}.
\end{definition}

Let us consider a \cmso\dash transduction $\Lambda$.
Assume that $M=(E,\mathcal{I})$ is a set-system and $(E_{\rho},\mathcal{I}_{\rho})$ is a set-system in $\mathcal{T}_{\Lambda}(M)$.
Now, as every member of $E_{\rho}$ is a singleton subset of $E$, we can identify each member of $E_{\rho}$ with an element of $E$. Thus we can think of $E_{\rho}$ as being a subset of $E$, and we can think of $\mathcal{I}_{\rho}$ as a collection of subsets of $E$.
We freely make this identification whenever it improves readability.

\subsection{Examples}

\begin{example}
\label{exampledual}
We will construct a \cmso\dash transduction taking each matroid to its dual.
Let \formula{Domain} be the sentence \formula{Matroid}, so $M=(E,\mathcal{I})$ has an output only if it is a matroid.
Note that in this case \formula{Domain} has no free variables, so $\mathcal{Z}$ is empty.
Let $\formula{NewElement}[T]$ be \sing{T}, which means that the ground set of the output will be $\{\{e\}\colon e\in E\}$ when the input is $M=(E,\mathcal{I})$.
Thus we can canonically associate the elements in the ground set of the output with the elements in $E$.
As discussed above, we blur this distinction, and think of the ground set of the output as being exactly the same as the ground set of the input to the transduction.
(This is helpful, because a matroid and its dual have the same ground set.)

Next we define $\formula{NewIndep}[T]$ to be $\formula{Coindep}[T]$.
So $\mathcal{T}_{\Lambda}(M)$ is non-empty if and only if $M$ is a matroid and in this case it contains a set-system that is isomorphic to the dual matroid $M^{*}$.
(Or \emph{equal} to the dual matroid, as long as we blur the distinction between $E$ and $\{\{e\}\colon e\in E\}$).
\end{example}

\begin{example}
\label{examplesimple}
Next we will construct a transduction that takes each matroid to its canonical simplification.
We again set \formula{Domain} to be \formula{Matroid}.
Define the formula $\formula{Parallel}[T]$ to be
\begin{multline*}
\forall X\forall Y\ ((\formula{Sing}[X]\land \formula{Sing}[Y]\land X\ne Y\land X\subseteq T\land Y\subseteq T)\to\\
\exists C\ (\formula{Circuit}[C]\land \formula{Union}[X,Y,C])).
\end{multline*}
Thus $\formula{Parallel}[T]$ is satisfied exactly when any pairwise distinct elements in $T$ form a circuit.
Note that this means that any singleton subset will satisfy $\formula{Parallel}[T]$.
Now we set $\formula{ParallelClass}[T]$ to be
\[
\sfmax_{\formula{Parallel}}[T]\land
(\formula{Sing}[T]\to \neg\formula{Circuit}[T]).
\]
So $\formula{ParallelClass}[T]$ is satisfied by the maximal parallel sets, as long as that set does not consist of a single loop element.
Now $\formula{NewElement}[T]$ is equal to $\formula{ParallelClass}[T]$, so the ground set of the output is the set of parallel classes of $M$.
We set $\formula{NewIndep}[T]$ to be
\[
\forall C\ ((C\subseteq T\land \formula{Circuit}[C])\to \formula{Parallel}[C]).
\]
We are defining \formula{NewIndep} so that a set satisfies it exactly when it contains no circuit with more than two elements.
This is exactly the same as saying that $T$ corresponds to an independent set in the canonical simplification of the matroid $M$.
Note that this is an \mso\dash transduction and not a \cmso\dash transduction, since a member of $E_{\rho}$ could be an arbitrarily large parallel class rather than a singleton set.
\end{example}

\begin{example}
\label{examplecomponent}
In this example we construct a \cmso\dash transduction that takes each matroid to its connected components.
In this case $\mathcal{T}_{\Lambda}(M)$ may contain multiple set-systems (because $M$ may have multiple connected components), which distinguishes this transduction from those in the previous examples.

We define $\formula{Domain}[Z]$ to be $\formula{Matroid}\land \formula{Component}[Z]$.
Thus $\formula{Domain}[Z]$ is satisfied by $M=(E,\mathcal{I})$ if and only if $M$ is a matroid and the free variable $Z$ is interpreted as a connected component of $M$.
So $\mathcal{T}_{\Lambda}(M)$ will contain one output for each connected component of $M$.

Next we set $\formula{NewElement}$ to be
\[
T\subseteq Z\land\formula{Sing}[T].
\]
So the new ground set will be $\{\{e\}\colon e\in Z\}$, which we identify with the set $Z$.
Finally, \formula{NewIndep} is $T\subseteq Z\land\formula{Indep}[T]$.
This means that the independent sets of the output will be exactly the independent subsets contained in $Z$.
So $\mathcal{T}_{\Lambda}(M)$ contains one set-system for each connected component of $M$, and those set-systems are isomorphic (or equal) to the restrictions of $M$ to each connected component.
\end{example}

\begin{example}
\label{exampleminor}
Next we will demonstrate that there is a transduction taking any matroid to its minors.
Let $M=(E,\mathcal{I})$ be a matroid.
Every minor of $M$ can be expressed as $(M/I)|S$, where $I\in \mathcal{I}$ and $S$ is disjoint from $I$.
That is, every minor can be obtained by contracting an independent set and then restricting to a subset of the remaining ground set.
A subset $X\subseteq S$ is independent in $(M/I)|S$ if and only if $X\cup I$ is independent in $M$.

Let \formula{Domain} be the formula
\[
\formula{Matroid}\land \formula{Indep}[Z_{1}]\land \formula{Disjoint}[Z_{1},Z_{2}].
\]
Note that the free variables of \formula{Domain} are $Z_{1}$ and $Z_{2}$.
Set \formula{NewElement} to be $\formula{Sing}[T]\land T\subseteq Z_{2}$.
Finally set \formula{NewIndep} to be
\[
T\subseteq Z_{2}\land \forall W\ (\formula{Union}[T,Z_{1},W]\to \formula{Indep}[W]).
\]
Let $\Lambda$ be the transduction $(\formula{Domain},\formula{NewElement},\formula{NewIndep})$.
Now the explanation in the previous paragraph shows that if $\rho$ satisfies \formula{Domain}, then $M_{\rho}$ is isomorphic to $(M/\rho(Z_{1}))|\rho(Z_{2})$.
Thus $\mathcal{T}_{\Lambda}(M)$ contains an isomorphic copy of every minor of $M$, as desired.
\end{example}

\begin{example}
\label{examplerestrictions}
In this example we create a transduction that is very similar to that in Example~\ref{exampleminor}, whose outputs are the restrictions of a matroid.
We let \formula{Domain} be $\formula{Matroid}\land Z_{2}=Z_{2}$.
(We include the dummy equality because we want $Z_{2}$ to be a free variable of \formula{Domain}.)
We use exactly the same formula \formula{NewElement} as in Example~\ref{exampleminor}, and we set \formula{NewIndep} to be $T\subseteq Z_{2}\land \formula{Indep}[T]$.
Then $\mathcal{T}_\Lambda(M)$ contains one output for each restriction of $M$ to a subset of the ground set.
\end{example}

\begin{example}
\label{examplerelaxation}
Let $M$ be a matroid with the ground set $E$.
Let $\mathcal{B}$ be the collection of bases of $M$ and let $H$ be a circuit-hyperplane of $M$.
There is a matroid with ground set $E$ and $\mathcal{B}\cup \{H\}$ as its family of bases and this is known as the \emph{relaxation} of $M$ (by $H$).

Let \formula{Domain} be the formula
$\formula{Matroid}\land\formula{CircHyp}[Z_{1}]$,
where $\formula{CircHyp}[Z_{1}]$ is $\formula{Circuit}[Z]\land\formula{Hyperplane}[Z]$.
Thus 		\formula{Domain} has a single free variable, $Z_{1}$.
Set \formula{NewElement} to be $\formula{Sing}[T]$.
Set \formula{NewIndep} to be $\formula{Indep}[T]\lor T=Z_{1}$.
If $\Lambda$ is the transduction 
\[(\formula{Domain},\formula{NewElement},\formula{NewIndep})\]
then $\mathcal{T}_{\Lambda}(M)$ contains isomorphic copies of all matroids obtained from $M$ by relaxing a circuit-hyperplane.
\end{example}

\subsection{Backwards Translation Theorem}

The usefulness of transductions comes from the fact that statements about the images of a transduction can be converted to statements about the pre-images.
To illustrate this fact we consider the transduction $\Lambda$ from Example~\ref{examplecomponent}, which takes any matroid to its connected components.
Imagine we have a property of matroids that is closed under direct sums and this property is defined by the monadic sentence $\varphi$ for connected matroids.
Now the Backwards Translation Theorem allows us to lift this sentence to a formula $\varphi^{\Lambda}$.
This formula will have a single free variable and satisfying interpretations will take this variable to connected components.
By applying the universal quantifier to this free variable we arrive at a sentence which is true for a matroid exactly when all the connected components of that matroid satisfy $\varphi$.
Thus we will have arrived at a sentence which defines the property for all matroids.

We move towards a proof of the Backwards Translation Theorem, which is an analogue of~\cite{MR3776760}*{Lemma B.1}.
Both that result and Theorem~\ref{BTT} can be derived from~\cite{MR2962260}*{Theorem 7.10}
but we provide a sketch proof for the specific context of matroidal transductions.

Let \[\Lambda= (\formula{Domain}, \formula{NewElement}, \formula{NewIndep})\] be a transduction, where $\mathcal{Z}$ is the set of free variables of \formula{Domain}.
Let $\mathcal{Y}\cup\{T\}$ be the set of free variables of \formula{NewElement}, where $\mathcal{Y}\subseteq \mathcal{Z}$.
We construct the new formula $\formula{ElementUnion}[\mathcal{Z}\cup \{W\}]$ as follows:
\begin{multline*}
\formula{Domain}[\mathcal{Z}]\land
(\forall X\ ((\sing{X}\land X\subseteq W)\to\\
\exists T\ (\formula{NewElement}[\mathcal{Y}\cup\{T\}]\land X\subseteq T\land T\subseteq W)))
\end{multline*}
Let $\rho\colon \mathcal{Z}\to 2^{E}$ be a function.
If $\rho\cup\{(W,F)\}$ is a satisfying assignment of \formula{ElementUnion} it means that $\rho$ is a satisfying assignment of \formula{Domain} and $F$ is equal to a union of a sets satisfying \formula{NewElement}.
In other words, \formula{ElementUnion} is satisfied if and only if $F$ can be expressed in the form $\Lambda^{-1}_{\rho}(I)$ for some subset $I\subseteq E_{\rho}$.

\begin{theorem}[Backwards Translation Theorem]
\label{BTT}
Let
\[\Lambda=(\formula{Domain},\formula{NewElement},\formula{NewIndep})\] be an \mso\dash transduction where $\mathcal{Z}$ is the set of free variables of \formula{Domain}.
Let $\varphi$ be an \mso\dash formula with $\mathcal{X}$ as its set of free variables, where $\mathcal{Z}\cap\mathcal{X}=\emptyset$.
There exists an \mso\dash formula $\varphi^{\Lambda}$ with $\mathcal{Z}\cup\mathcal{X}$ as its set of free variables such that for every set-system $M=(E,\mathcal{I})$ and every interpretation \[\theta\colon \mathcal{Z}\cup\mathcal{X}\to 2^{E},\] $\varphi^{\Lambda}$ is satisfied by $(M,\theta)$ if and only if the following conditions hold:
\begin{enumerate}[label=$\bullet$]
\item the interpretation $\rho=\theta|_{\mathcal{Z}}$ satisfies \formula{Domain}, and
\item  for each $X\in\mathcal{X}$, there is a subset $Y_{X}\subseteq E_{\rho}$ such that $\theta(X)=\Lambda_{\rho}^{-1}(Y_{X})$, and furthermore $\varphi$ is satisfied by $M_{\rho}$ under the interpretation that takes each $X\in\mathcal{X}$ to $Y_{X}$.
\end{enumerate}
Moreover, if $\Lambda$ is a \cmso\dash transduction, then for every \cmso\dash formula $\varphi$ there exists a \cmso\dash formula $\varphi^{\Lambda}$ with the above properties.
\end{theorem}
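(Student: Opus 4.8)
The plan is to prove this by induction on the structure of $\varphi$, translating subformula by subformula, and it is convenient to split $\varphi^{\Lambda}$ into a ``guard'' and a recursively defined ``core'' $\tau(\varphi)$. The guard uses the formula $\formula{ElementUnion}$ set up just before the statement: $(M,\rho\cup\{(W,F)\})$ satisfies $\formula{ElementUnion}[\mathcal{Z}\cup\{W\}]$ precisely when $\rho$ satisfies $\formula{Domain}$ and $F=\Lambda^{-1}_{\rho}(I)$ for some $I\subseteq E_{\rho}$. I would set
\[
\varphi^{\Lambda}\ :=\ \formula{Domain}[\mathcal{Z}]\ \land\ \Bigl(\bigwedge_{X\in\mathcal{X}}\formula{ElementUnion}[\mathcal{Z}\cup\{X\}]\Bigr)\ \land\ \tau(\varphi).
\]
The first two conjuncts encode the first bullet of the theorem together with the ``there is a subset $Y_{X}$'' clause of the second bullet, once and for all, so the remaining task is to define $\tau$ so that, whenever $\rho=\theta|_{\mathcal{Z}}$ satisfies $\formula{Domain}$ and each $\theta(X)$ with $X\in\mathcal{X}$ is an $\formula{ElementUnion}$, the formula $\tau(\varphi)$ holds at $(M,\theta)$ if and only if $M_{\rho}$ satisfies $\varphi$ under $X\mapsto Y_{X}$ for some choice of preimages $Y_{X}$ of the $\theta(X)$.

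The recursion defining $\tau$ is the natural relativization. For the atom $\indep{X}$ I would take $\tau(\indep{X})$ to be the formula $\formula{NewIndep}$ with $T$ replaced by $X$ (its other free variables lie in $\mathcal{Z}$ and are left alone, after renaming bound variables to avoid clashes with $\mathcal{Z}$); this is correct because $Y_{X}\in\mathcal{I}_{\rho}$ depends on $Y_{X}$ only through the set $\Lambda^{-1}_{\rho}(Y_{X})=\theta(X)$, directly from the definition of $\mathcal{I}_{\rho}$. I would take $\tau(X\subseteq Y)$ to be $X\subseteq Y$: since $\Lambda^{-1}_{\rho}$ is monotone and the largest preimage of a set $F$ is $\{T\in E_{\rho}\colon T\subseteq F\}$, one can choose preimages with $Y_{X}\subseteq Y_{Y}$ exactly when $\theta(X)\subseteq\theta(Y)$. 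In the \cmso\ case I would also take $\tau(|X|_{p,q})$ to be $|X|_{p,q}$. For the connectives I would take $\tau(\neg\psi)=\neg\tau(\psi)$ and $\tau(\psi_{1}\land\psi_{2})=\tau(\psi_{1})\land\tau(\psi_{2})$; it is essential that the guard conjuncts were pulled outside $\tau$, so that a negation occurring inside $\varphi$ cannot negate the statement ``$\theta(X)$ is an $\formula{ElementUnion}$''. For the quantifier I would take $\tau(\exists X\,\psi)=\exists X\,(\formula{ElementUnion}[\mathcal{Z}\cup\{X\}]\land\tau(\psi))$, turning a quantifier over subsets of $E_{\rho}$ into a guarded quantifier over subsets of $E$ of the form $\Lambda^{-1}_{\rho}(I)$. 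The inductive lemma, proved by induction on $\varphi$ and uniformly over all $M$ and all $\theta$ satisfying the guard, is that $\tau(\varphi)$ expresses ``$\varphi$ holds in $M_{\rho}$ under some preimages of the $\theta(X)$''; conjoining it with the guard gives the theorem, and counting the free variables of $\varphi^{\Lambda}$ uses $\mathcal{Z}\cap\mathcal{X}=\emptyset$.

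The step that needs the most care, and the main obstacle, is the quantifier step, whose heart is the passage between subsets of the derived ground set $E_{\rho}$ and subsets of $E$ under $I\mapsto\Lambda^{-1}_{\rho}(I)$. For a \cmso\dash transduction this map is a cardinality\dash preserving bijection from $2^{E_{\rho}}$ onto $2^{\bigcup E_{\rho}}$: distinct subsets of $E_{\rho}$ have distinct unions, so the preimage $Y_{X}$ of $\theta(X)$ is unique, the inductive lemma sharpens to ``the preimages work'', and the recursion above is literally correct. It is also exactly this bijection that lets the atom $|X|_{p,q}$ pass through unchanged, which is why the stronger, counting\dash sensitive assertion of the theorem needs precisely the singleton hypothesis built into the definition of a \cmso\dash transduction. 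In the general \mso\ setting $\Lambda^{-1}_{\rho}$ need not be injective, so one must check more carefully that the guarded quantifier over $\formula{ElementUnion}$s really does range over every preimage-union relevant to $\varphi$; the cleanest route to the statement in full generality is to invoke the general Backwards Translation Theorem~\cite{MR2962260}*{Theorem~7.10}, while the self-contained recursion above already goes through verbatim whenever the members of $E_{\rho}$ are pairwise disjoint (each $I\subseteq E_{\rho}$ then being the largest preimage of its own union), which covers all the transductions used here. The remaining work — renaming bound variables so the substitutions are legal, and tracking free variables — is routine.
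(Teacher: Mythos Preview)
Your approach is essentially the paper's: induction on the structure of $\varphi$, with \formula{Domain} and \formula{ElementUnion} as guards and the natural translation of each atom, connective, and quantifier. Your guard/core factorisation $\varphi^{\Lambda}=\formula{Domain}\land(\bigwedge_{X\in\mathcal{X}}\formula{ElementUnion})\land\tau(\varphi)$ is a tidier packaging of what the paper does by re-attaching the guard at every inductive step; the resulting formulas are equivalent, and your quantifier clause (guarding the bound variable inside the scope of $\exists$) matches the paper's.

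The one place you diverge is in explicitly isolating the non-injectivity of $I\mapsto\Lambda^{-1}_{\rho}(I)$ as the delicate point for the negation step in the general \mso\ case. The paper, which describes its argument as a ``sketch proof'' derivable from \cite{MR2962260}*{Theorem~7.10}, handles $\neg$, $\land$, and $\exists$ by writing down the translated formula and asserting that ``it is fairly straightforward to check that $\varphi^{\Lambda}$ is the desired formula,'' without singling out this issue. Your remark that the recursion is unproblematic once the members of $E_{\rho}$ are pairwise disjoint (hence in particular for all \cmso\dash transductions and for every transduction actually used in the paper) is a correct and useful sharpening of the sketch; otherwise the two arguments coincide.
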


We remark that if $\varphi$ is a sentence then $\mathcal{X}$ is empty, and $\varphi^{\Lambda}$ will be a formula with $\mathcal{Z}$ as its family of free variables.

\begin{proof}[Proof of \textup{Theorem~\ref{BTT}}.]
Let the set of free variables in \formula{NewElement} be $\mathcal{Y}_{E}\cup\{T\}$ where $\mathcal{Y}_{E}$ is a subset of $\mathcal{Z}$.
Similarly, let the set of free variables of \formula{NewIndep} be $\mathcal{Y}_{I}\cup\{T\}$ where $\mathcal{Y}_{I}\subseteq \mathcal{Z}$.
The proof is by induction on the number of steps required to build the formula $\varphi$.

Assume that $\varphi$ is atomic.
First we consider the case that $\varphi$ is \indep{X}, so that $\mathcal{X}=\{X\}$.
We set $\varphi^{\Lambda}$ to be
\[\formula{Domain}\land \formula{ElementUnion}[W\rightharpoonup X]\land \formula{NewIndep}[T\rightharpoonup X].\]
Note that the set of free variables in $\varphi^{\Lambda}$ is $\mathcal{Z}\cup \{X\}$, as desired.

Consider the function $\theta\colon \mathcal{Z}\cup \{X\}\to 2^{E}$.
We first assume that $\theta$ is a satisfying interpretation of $\varphi^{\Lambda}$.
Then $\rho=\theta|_{\mathcal{Z}}$ is a satisfying interpretation of \formula{Domain}.
Moreover, the presence of the formula $\formula{ElementUnion}[W\rightharpoonup X]$ in $\varphi^{\Lambda}$ means that $\theta(X)$ has to be a union of sets that satisfy \formula{NewElement}.
Thus there is a subset $Y_{X}\subseteq E_{\rho}$ such that $\theta(X)=\Lambda_{\rho}^{-1}(Y_{X})$.
Because $\formula{NewIndep}[T\rightharpoonup X]$ is satisfied, it follows that $Y_{X}$ is in $\mathcal{I}_{\rho}$.
So then $\varphi=\formula{Indep}[X]$ is satisfied by $M_{\rho}$ under the interpretation $X\mapsto Y_{X}$.

For the other direction, assume that $\rho=\theta|_{\mathcal{Z}}$ satisfies \formula{Domain} and that there exists $Y_{X}\subseteq E_{\rho}$ such that $\theta(X)=\Lambda_{\rho}^{-1}(Y_{X})$, where $\varphi$ is satisfied by $M_{\rho}$ under the interpretation $X\mapsto Y_{X}$. 
This means that $\formula{ElementUnion}[W\rightharpoonup X]$ is satisfied by the restriction of $\theta$ to $\{X\}$.
Moreover, because $\varphi=\indep{X}$ is satisfied by $M_{\rho}$ it follows that $Y_{X}$ belongs to $\mathcal{I}_{\rho}$, which can only be true if $\formula{NewIndep}[T\rightharpoonup X]$ is satisfied by $M$ under the interpretation
\[\rho|_{\mathcal{Y}_{I}}\cup (X,\Lambda_{\rho}^{-1}(Y_{X})) = \rho|_{\mathcal{Y}_{I}}\cup (X,\theta(X)).\]
Now the construction of $\varphi^{\Lambda}$ means that it is satisfied by $\theta$, so we have shown that the theorem holds in the case that $\varphi=\formula{Indep}(X)$.

Next we assume that $\varphi$ is $U\subseteq V$.
We set $\varphi^{\Lambda}$ to be
\[\formula{Domain}\land \formula{ElementUnion}[W\rightharpoonup U]\land\formula{ElementUnion}[W\rightharpoonup V]\land U\subseteq V.\]
Thus the set of free variables of $\varphi^{\Lambda}$ is $\mathcal{Z}\cup\{U,V\}$.
Essentially the same arguments as the previous paragraphs show that $\varphi^{\Lambda}$ has the desired properties.

Next assume that $\Lambda$ is a \cmso\dash transduction and that $\varphi$ is $|X|_{p,q}$.
 We set $\varphi^{\Lambda}$ to be
\[\formula{Domain}\land \formula{ElementUnion}[W\rightharpoonup X]\land |X|_{p,q}.\]
The fact that every member of $E_{\rho}$ is a singleton set means that $|I|=|\Lambda_{\rho}^{-1}(I)|$ for every $I\subseteq E_{\rho}$.
From this we can deduce that $\varphi^{\Lambda}$ is the desired formula using the same arguments as before.

Next we can assume that $\varphi$ is not atomic.
Let $\varphi$ be $\neg\psi$.
Then we set $\varphi^{\Lambda}$ to be
\[
\neg\psi^{\Lambda}\land\formula{Domain}\land \bigwedge_{X\in \mathcal{X}}\formula{ElementUnion}[W\rightharpoonup X].
\]
If $\varphi$ is $\varphi_{1}\land \varphi_{2}$
then we set $\varphi^{\Lambda}$ to be
\[
\varphi_{1}^{\Lambda}\land \varphi_{2}^{\Lambda}\land\formula{Domain}\land \bigwedge_{X\in \mathcal{X}}\formula{ElementUnion}[W\rightharpoonup X].
\]
Finally, if $\varphi=\exists U\psi$, then we set $\varphi^{\Lambda}$ to be
\[
\exists U \psi^{\Lambda}\land\formula{Domain}\land \bigwedge_{X\in \mathcal{X}\cup \{U\}}\formula{ElementUnion}[W\rightharpoonup X].
\]
In any of these cases it is fairly straightforward to check that $\varphi^{\Lambda}$ is the desired formula.
\end{proof}

\begin{corollary}
\label{BTTcorollary}
Let $\Lambda$ be an \mso\dash transduction.
Let $\mathcal{M}$ be an \mso\dash definable class of set-systems.
Then
\[
\{M\ \text{a set-system}\colon M_{\rho} \in \mathcal{M}\ \text{for all}\ M_{\rho}\in \mathcal{T}_{\Lambda}(M)\}
\]
and
\[
\{M\ \text{a set-system}\colon M_{\rho} \in \mathcal{M}\ \text{for at least one}\ M_{\rho}\in \mathcal{T}_{\Lambda}(M)\}
\]
are both \mso\dash definable classes.
Similarly, if $\mathcal{M}$ is \cmso\dash definable and $\Lambda$ is a \cmso\ transduction then both these classes are \cmso\dash definable.
\end{corollary}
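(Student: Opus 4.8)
The plan is to take a sentence defining $\mathcal{M}$, feed it through the Backwards Translation Theorem, and then quantify away the free variables $\mathcal{Z}$, using $\exists$ for the ``at least one'' class and a guarded $\forall$ for the ``for all'' class. Since $\mathcal{M}$ is \mso\dash definable, fix an \mso\dash sentence $\varphi$ such that a set-system $N$ satisfies $\varphi$ if and only if $N\in\mathcal{M}$. As $\varphi$ is a sentence, its set of free variables is empty, so the hypothesis $\mathcal{Z}\cap\mathcal{X}=\emptyset$ of Theorem~\ref{BTT} holds vacuously, and we obtain an \mso\dash formula $\varphi^{\Lambda}$ whose free variables are exactly $\mathcal{Z}$, the set of free variables of \formula{Domain}. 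Specializing the conclusion of Theorem~\ref{BTT} to the case of empty $\mathcal{X}$, for every set-system $M=(E,\mathcal{I})$ and every $\rho\colon\mathcal{Z}\to 2^{E}$, the pair $(M,\rho)$ satisfies $\varphi^{\Lambda}$ if and only if $\rho$ satisfies \formula{Domain} and $M_{\rho}$ satisfies $\varphi$, that is, $M_{\rho}\in\mathcal{M}$. Since $\mathcal{T}_{\Lambda}(M)=\{M_{\rho}\colon\rho\text{ satisfies }\formula{Domain}\}$, the satisfying interpretations of $\varphi^{\Lambda}$ enumerate precisely the members of $\mathcal{T}_{\Lambda}(M)$ lying in $\mathcal{M}$.

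Next I would close off the variables $\mathcal{Z}=(Z_{1},\ldots,Z_{k})$, writing $\exists\mathcal{Z}\,\psi$ and $\forall\mathcal{Z}\,\psi$ for $\exists Z_{1}\cdots\exists Z_{k}\,\psi$ and $\forall Z_{1}\cdots\forall Z_{k}\,\psi$ (both read as $\psi$ itself when $k=0$); each $Z_{i}$ is a free variable of the relevant subformula because $\mathcal{Z}$ is the set of free variables of \formula{Domain}. For the ``at least one'' class I would use the sentence $\exists\mathcal{Z}\,\varphi^{\Lambda}$: a set-system $M$ satisfies it exactly when some interpretation of $\mathcal{Z}$ makes $\varphi^{\Lambda}$ hold, which by the previous paragraph happens exactly when some member of $\mathcal{T}_{\Lambda}(M)$ belongs to $\mathcal{M}$. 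For the ``for all'' class I would use $\forall\mathcal{Z}\,(\formula{Domain}\to\varphi^{\Lambda})$: here $M$ satisfies the sentence exactly when every interpretation $\rho$ that satisfies \formula{Domain} also satisfies $\varphi^{\Lambda}$, that is, has $M_{\rho}\in\mathcal{M}$; the guard $\formula{Domain}\to(-)$ makes the sentence vacuously true when no interpretation satisfies \formula{Domain}, i.e.\ when $\mathcal{T}_{\Lambda}(M)=\emptyset$, which is the correct behaviour since an empty family of outputs lies in $\mathcal{M}$ vacuously. Both sentences are \mso\dash sentences, since the shorthands $\neg$, $\land$, $\lor$, $\to$, $\exists$, $\forall$ introduce no modular-counting predicate.

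For the \cmso\ statement I would run the identical argument, now invoking the final sentence of Theorem~\ref{BTT}: when $\Lambda$ is a \cmso\dash transduction and $\varphi$ is a \cmso\dash sentence defining $\mathcal{M}$, the formula $\varphi^{\Lambda}$ is a \cmso\dash formula, so $\exists\mathcal{Z}\,\varphi^{\Lambda}$ and $\forall\mathcal{Z}\,(\formula{Domain}\to\varphi^{\Lambda})$ are \cmso\dash sentences. I do not expect a genuine obstacle here; the only points that need care are the bookkeeping in the degenerate case $\mathcal{Z}=\emptyset$ and, for the ``for all'' class, the vacuous case $\mathcal{T}_{\Lambda}(M)=\emptyset$, both of which are handled by the implication guard, together with renaming bound variables so that the conjunctions and implications above meet the variable constraints in the definition of \cmso\ (always possible).
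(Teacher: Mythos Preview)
Your proof is correct and follows essentially the same route as the paper: apply Theorem~\ref{BTT} to a defining sentence $\varphi$ and then quantify over $\mathcal{Z}$, using $\forall\mathcal{Z}\,(\formula{Domain}\to\varphi^{\Lambda})$ for the universal class and an existential closure for the other. The only cosmetic difference is that the paper writes $\exists\mathcal{Z}\,(\formula{Domain}\land\varphi^{\Lambda})$ for the ``at least one'' sentence, whereas you drop the redundant conjunct \formula{Domain} (redundant because satisfaction of $\varphi^{\Lambda}$ already entails it).
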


\begin{proof}
Let $\Lambda$ be $(\formula{Domain},\formula{NewElement},\formula{NewIndep})$, where the free variables of \formula{Domain} are $Z_{1},\ldots, Z_{s}$.
Assume that $\varphi$ is an \mso\dash sentence that is satisfied precisely by the set-systems in $\mathcal{M}$.
Let $\varphi^{\Lambda}$ be the formula provided by Theorem~\ref{BTT}.
Thus the free variables of $\varphi^{\Lambda}$ are $Z_{1},\ldots, Z_{s}$, and $\rho$ is a satisfying interpretation of $\varphi^{\Lambda}$ if and only if $\rho$ satisfies \formula{Domain} and $M_{\rho}$ satisfies $\varphi$.
Now the sentence
\[
\forall Z_{1}\forall Z_{2}\cdots \forall Z_{s}\ (\formula{Domain}\to \varphi^{\Lambda})
\]
is satisfied by $M$ if and only if $M_{\rho}$ satisfies $\varphi$ for every $M_{\rho}$ in $\mathcal{T}_{\Lambda}(M)$.
Similarly,
\[
\exists Z_{1}\exists Z_{2}\cdots \exists Z_{s}\ (\formula{Domain}\land \varphi^{\Lambda})
\]
is satisfied by $M$ if and only if $M_{\rho}$ satisfies $\varphi$ for at least one $M_{\rho}$ in $\mathcal{T}_{\Lambda}(M)$.
\end{proof}

To illustrate the usefulness of Corollary~\ref{BTTcorollary}, we apply it to the transduction in Example~\ref{examplecomponent}.
The corollary now tells us that if $\mathcal{M}$ is a definable class of connected matroids, then the class of matroids, all of whose connected components belong to $\mathcal{M}$, is definable.
This is the content of~\cite{MR4395073}*{Lemma 3.9}.
We also see from Corollary~\ref{BTTcorollary} that there is a sentence defining the class of matroids at least one of whose connected components is in $\mathcal{M}$ .

In the next result we list various transductions from matroids to derived matroids.
Note that we identify each matroid with its isomorphism class, so that the set of minors of $M$ is finite since it contains one copy of each minor of $M$.

\begin{proposition}
\label{transductionlist}
In each of the following cases, there is an \mso\dash transduction $\Lambda$ so that for every set-system $M=(E,\mathcal{I})$, the image $\mathcal{T}_{\Lambda}(M)$ is empty if $M$ is not a matroid, and is otherwise as described below.
\begin{enumerate}[label = \textup{(\roman*)}]
\item The only member of $\mathcal{T}_{\Lambda}(M)$ is isomorphic to $M^{*}$, the dual of $M$.
\item The only member of $\mathcal{T}_{\Lambda}(M)$ is isomorphic to $\operatorname{si}(M)$, the canonical simplification of $M$.
\item $\mathcal{T}_{\Lambda}(M)$ is equal to the set of restrictions of $M$ to each of its connected components.
\item$\mathcal{T}_{\Lambda}(M)$ is equal to the set of $3$\dash connected components of $M$, when $M$ is a connected matroid, and is otherwise empty.
\item $\mathcal{T}_{\Lambda}(M)$ is equal to the set of minors of $M$.
\item $\mathcal{T}_{\Lambda}(M)$ is equal to the set of restrictions of $M$.
\item $\mathcal{T}_{\Lambda}(M)$ is equal to the set of  proper minors of $M$.
\item $\mathcal{T}_{\Lambda}(M)$ is equal to the set of matroids that can be obtained from $M$ by relaxing a circuit-hyperplane.
\end{enumerate}
The transductions in \textup{(i)}, \textup{(iii)}, \textup{(v)}, \textup{(vi)}, \textup{(vii)}, and \textup{(viii)} are \cmso\dash transductions.
\end{proposition}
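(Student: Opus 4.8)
The plan is this. For every case other than (iv) and (vii), the required transduction has already been built: (i) is the transduction of Example~\ref{exampledual}, (ii) is Example~\ref{examplesimple}, (iii) is Example~\ref{examplecomponent}, (v) is Example~\ref{exampleminor}, (vi) is Example~\ref{examplerestrictions}, and (viii) is Example~\ref{examplerelaxation}. The discussion accompanying each example already checks the stated behaviour, including that $\mathcal{T}_{\Lambda}(M)$ is empty when $M$ is not a matroid (because $\formula{Matroid}$ is a conjunct of $\formula{Domain}$ in each case). So for these cases nothing remains except to read off which transductions are $\cmso$\dash transductions. By definition this holds exactly when every satisfying interpretation of $\formula{NewElement}$ assigns a singleton to $T$, which in turn holds whenever $\sing{T}$ is a conjunct of $\formula{NewElement}$. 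Inspecting the examples shows this is so for (i), (iii), (v), (vi), and (viii), while in (ii) a member of $E_{\rho}$ may be an arbitrarily large parallel class.

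For (vii) we modify Example~\ref{exampleminor}. There the output for an interpretation $\rho$ is isomorphic to $(M/\rho(Z_{1}))|\rho(Z_{2})$, which is a proper minor of $M$ precisely when $\rho(Z_{1})\ne\emptyset$ or $\rho(Z_{2})\ne E$; since we identify a matroid with its isomorphism class, every proper minor arises this way and none of these outputs equals $M$. Both conditions are expressible --- ``$\rho(Z_{1})\ne\emptyset$'' by $\neg\formula{Empty}[Z_{1}]$ and ``$\rho(Z_{2})\ne E$'' by $\exists W\,(\sing{W}\land W\nsubseteq Z_{2})$ --- so we take $\formula{Domain}$ to be the $\formula{Domain}$ of Example~\ref{exampleminor} conjoined with $\neg\formula{Empty}[Z_{1}]\lor\exists W\,(\sing{W}\land W\nsubseteq Z_{2})$, keeping $\formula{NewElement}$ and $\formula{NewIndep}$ unchanged. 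This is still a $\cmso$\dash transduction.

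Case (iv) is where the real work lies, and it is the one I expect to be the main obstacle. The idea is to encode the canonical tree decomposition of a connected matroid into $3$\dash connected parts, circuits, and cocircuits (the Cunningham--Edmonds decomposition). A node $t$ of the tree carries a set $R_{t}\subseteq E(M)$ of real elements together with one virtual basepoint for each incident tree-edge $e$, and deleting $e$ from the tree partitions $E(M)\setminus R_{t}$ into blocks $E_{te}$, the block $E_{te}$ being the union of the real elements on the far side of $e$. We would represent the virtual basepoint at $e$ by the subset $E_{te}$, so that the output ground set $E_{\rho}$ consists of the singletons $\{f\}$ with $f\in R_{t}$ together with the sets $E_{te}$; then $\formula{NewElement}[T]$ should hold exactly when $T$ is a singleton subset of the selected set $Z$ or $T$ is the element set of a connected component of $M|(E(M)\setminus Z)$ (the components being exactly the blocks). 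For $\formula{NewIndep}$ we translate independence in the part $M_{t}$ into a condition about $M$ using the circuit structure of $2$\dash sums: a set $I\subseteq R_{t}$ of real elements is independent in $M_{t}$ if and only if it is independent in $M$, and the presence of the virtual basepoint at $e$ in an independent set corresponds, via $\formula{Skew}[\cdot,E_{te}]$, to the real part of the set failing to span that basepoint. Note that this transduction is \emph{not} a $\cmso$\dash transduction, since its virtual elements are in general non-singleton subsets of $E(M)$ --- consistent with what the proposition claims.

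The two points that require care in (iv) are, first, showing that ``$Z$ is the real-element set of a node of the canonical decomposition'' is genuinely $\mso$\dash definable: the decomposition is canonical, but capturing a node logically means working with $2$\dash separations and the $k$\dash$\formula{Separating}$ formulas and excluding spurious candidates, probably by requiring that each block $E_{te}$ is suitably $2$\dash separating, that $M|(E(M)\setminus Z)$ has the blocks as its components, and that the reconstructed part is a $3$\dash connected matroid, a circuit, or a cocircuit, with a maximality clause ruling out sub-parts. Second, one must verify that the $\formula{NewIndep}$ formula is correct at a node with many incident virtual elements, where the two-summand description above must be applied to several basepoints at once; checking that the resulting formula exactly captures independence in $M_{t}$ against the decomposition theory is the delicate part. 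Everything else is the routine verification attached to the examples together with the bookkeeping supplied by Theorem~\ref{BTT}.
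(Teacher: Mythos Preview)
For (i)--(iii), (v), (vi), and (viii) your proposal matches the paper exactly: both cite the same examples. For (vii) the paper simply appends $\exists U\,(\sing{U}\land U\nsubseteq Z_{2})$ to \formula{Domain}; your disjunction $\neg\formula{Empty}[Z_{1}]\lor\exists W\,(\sing{W}\land W\nsubseteq Z_{2})$ is equivalent, since $Z_{1}$ and $Z_{2}$ are already required to be disjoint, so a nonempty $Z_{1}$ forces $Z_{2}\neq E$ anyway.

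For (iv) the paper takes a genuinely different route. Rather than the Cunningham--Edmonds tree decomposition, it uses the \emph{wedge} construction of \cite{MR4395073}*{Section~5}: \formula{Domain} asserts that $M$ is connected and that $A$ is a $2$\dash separating flat and coflat whose wedges (maximal $2$\dash separating subsets of $E-A$) are pairwise disjoint, skew, and coskew; \formula{NewElement} is satisfied by $A$ itself and by each wedge; \formula{NewIndep} says that any circuit contained in the union lies inside $A$ or inside a single wedge. Correctness is then a citation of \cite{MR4395073}*{Proposition~5.7}. This buys the paper a short proof by outsourcing exactly the two delicate points you flag --- \mso\dash definability of ``$Z$ is the real-element set of a node'' and correctness of a multi-basepoint \formula{NewIndep} --- to existing work. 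Your Cunningham--Edmonds sketch is plausible and in spirit not far from the wedge picture (your blocks $E_{te}$ play a role analogous to wedges), but as you yourself note it is not a complete argument; the paper's approach sidesteps those gaps entirely.
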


\begin{proof}
Statements (i)--(iii) follow from Examples~\ref{exampledual},~\ref{examplesimple}, and~\ref{examplecomponent}.

The proof of statement (iv) is essentially contained in~\cite{MR4395073}*{Section~5}, although the language of transductions is not used there.
We describe the proof here without going into details.
Let $M=(E,\mathcal{I})$ be a connected matroid and let $A$ be a $2$\dash separating subset of $E$.
Let $B$ be $E-A$.
A \emph{wedge} (relative to $A$) is a set that is maximal amongst the $2$\dash separating subsets of $B$.
It is clear that there is a formula with free variables $A$ and $U$ that will be satisfied when $A$ is $2$\dash separating and $U$ is a wedge relative to $A$.
We let $\formula{Domain}$ be a formula expressing the fact that $M=(E,\mathcal{I})$ is a connected matroid and $A$ is a $2$\dash separating subset of $E$ with the property that $A$ is a flat and a coflat and any pair of distinct wedges relative to $A$ are disjoint, skew, and coskew.
We let $\formula{NewElement}$ be the formula that is satisfied by $A$ and all the wedges relative to $A$.
We let $\formula{NewIndep}$ be the formula satisfied by sets $X$ such that $X$ is a union of sets that are either $A$ or wedges relative to $A$, and any circuit contained in $X$ is contained in either $A$ or a wedge.
It now follows from Theorem~\cite{MR4395073}*{Proposition~5.7} that we have completed a description of a transduction from a connected matroid to its $3$\dash connected components.

Examples~\ref{exampleminor} and~\ref{examplerestrictions} provide proofs of (v) and (vi).
To prove statement (vii), we simply modify \formula{Domain} from Example~\ref{exampleminor} so that it is equal to
\[
\formula{Matroid}\land \formula{Indep}[Z_{1}]\land \formula{Disjoint}[Z_{1},Z_{2}]\land \exists U\ (\formula{Sing}[U]\land U\nsubseteq Z_{2}).
\]
This guarantees that there is an element that is not in $Z_{2}$, which is the ground set of the minor.
Example~\ref{examplerelaxation} provides a proof of statement (viii).
\end{proof}

\section{Operations on definable classes}
\label{sect:operations}

In this section we collect multiple tools for showing that classes of matroids are definable.

\begin{proposition}
\label{prop:singlematroid}
Let $M=(E,\mathcal{I})$ be a set-system.
Then $\mathcal{M}=\{M\}$ is \mso\dash definable.
\end{proposition}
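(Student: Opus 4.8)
The plan is to exhibit an explicit \mso\dash sentence $\varphi$ such that a set-system $N$ satisfies $\varphi$ precisely when $N$ is isomorphic to $M$ (recall that whether a set-system satisfies a sentence depends only on its isomorphism type, and that we identify set-systems with their isomorphism classes, so this is what ``$\mathcal{M}=\{M\}$ is definable'' must mean). Write $E=\{e_{1},\dots,e_{n}\}$. First I would set up a block of clauses in free variables $X_{1},\dots,X_{n}$ asserting that each $X_{i}$ is a singleton, that the $X_{i}$ are pairwise distinct, and that every singleton is one of them:
\[
\formula{Anchor}[X_{1},\dots,X_{n}]\ :=\ \bigwedge_{i=1}^{n}\sing{X_{i}}\ \land\ \bigwedge_{1\le i<j\le n}X_{i}\ne X_{j}\ \land\ \forall W\Bigl(\sing{W}\to\bigvee_{i=1}^{n}W=X_{i}\Bigr),
\]
where $\bigwedge$ and $\bigvee$ denote iterated $\land$ and $\lor$. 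Any interpretation of $\formula{Anchor}$ in a set-system $N$ forces the ground set of $N$ to have exactly $n$ elements and determines a bijection between $\{X_{1},\dots,X_{n}\}$ and that ground set.

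Second, I would encode the independent sets of $M$. For each $S\subseteq\{1,\dots,n\}$ set
\[
\eta_{S}[Y]\ :=\ \bigwedge_{i\in S}X_{i}\subseteq Y\ \land\ \bigwedge_{i\notin S}X_{i}\nsubseteq Y .
\]
In the presence of $\formula{Anchor}$, the formula $\eta_{S}[Y]$ holds exactly when $Y$ is the union of those $X_{i}$ with $i\in S$; here it is essential that \emph{every} singleton is among the $X_{i}$, so that $\eta_{S}[Y]$ forces equality and not merely containment. Letting $\mathcal{S}$ be the collection of $S\subseteq\{1,\dots,n\}$ with $\{e_{i}:i\in S\}\in\mathcal{I}$, I would take
\[
\varphi\ :=\ \exists X_{1}\cdots\exists X_{n}\Bigl(\formula{Anchor}[X_{1},\dots,X_{n}]\ \land\ \forall Y\bigl(\indep{Y}\leftrightarrow\textstyle\bigvee_{S\in\mathcal{S}}\eta_{S}[Y]\bigr)\Bigr).
\]
Then I would check both implications. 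If $f\colon M\to N$ is an isomorphism, interpreting each $X_{i}$ as $\{f(e_{i})\}$ satisfies $\formula{Anchor}$, and since $f$ carries $\mathcal{I}$ bijectively onto the independent sets of $N$ it also satisfies the $\indep{Y}$ clause, so $N$ satisfies $\varphi$. Conversely, if $N=(E',\mathcal{I}')$ satisfies $\varphi$, fix witnesses $X_{1},\dots,X_{n}$; then $|E'|=n$, the map sending $e_{i}$ to the unique element of $X_{i}$ is a bijection $E\to E'$, and the $\indep{Y}$ clause says a subset of $E'$ belongs to $\mathcal{I}'$ if and only if its preimage belongs to $\mathcal{I}$, so this bijection is an isomorphism $M\to N$.

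I do not expect a genuine obstacle: the content is bookkeeping. The two points needing slight care are that $\eta_{S}$ must pin down a set rather than a superset of it (handled by the clause forcing every singleton to occur among the $X_{i}$), and the degenerate cases $n\le 1$ together with the fact that $\varphi$ has length exponential in $n$; an empty conjunction and an empty disjunction may be read as the formulas $\exists X\,(X=X)$ and $\exists X\,\neg(X=X)$ respectively, or the cases $n\le 1$ can simply be handled directly. Note finally that $\varphi$ uses no modular counting predicate, so the class is in fact \mso\dash definable, as claimed.
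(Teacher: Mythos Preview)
Your proof is correct and follows essentially the same approach as the paper: both pin down a bijection between $\{1,\dots,n\}$ and the ground set via singleton variables satisfying an ``anchor'' formula, and then encode $\mathcal{I}$ by a disjunction over its members inside an $\indep{\cdot}\leftrightarrow\cdots$ clause. The only differences are cosmetic---your $\eta_{S}$ uses $X_{i}\nsubseteq Y$ for $i\notin S$ where the paper instead quantifies over singletons of $Y$---and you are slightly more explicit about the isomorphism issue and the degenerate cases.
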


\begin{proof}
For simplicity of notation, we assume that the ground set $E$ is equal to $\{1,2,\dots,n\}$ for some positive integer $n$. Our set of variables will be $\{X_1,\ldots,X_n\}\cup\{X,Y\}$.

We begin by defining several formulas. Let $\formula{GroundSet}_{E}$ be the formula
\[
\left(\bigwedge_{i=1}^n \sing{X_i}\right) \wedge 
       \left(\bigwedge_{i<j} X_i\neq X_j\right) \wedge
    \forall X\,\left(\sing{X}\to\bigvee_{i=1}^n (X=X_i)\right).
\]
Given a subset $U$ of $E$ let
\[
\formula{Equal}_U[X] = \left(\bigwedge_{i\in U} X_i\subseteq X\right)\wedge
     \forall Y\,\left((\sing{Y}\wedge Y\subseteq X)\to\bigvee_{i\in U}(Y=X_i)\right),
\]
and given a set $\mathcal{J}$ of subsets of $E$ let 
\[
\formula{Member}_\mathcal{J}[X] = \bigvee_{U\in \mathcal{J}} \formula{Equal}_U[X]. 
\]
Then
\[
\exists X_1\exists X_2\cdots\exists X_n\bigl(\formula{Groundset}_E\wedge
    \forall X(\indep{X}\leftrightarrow \formula{Member}_\mathcal{I}[X])\bigr)
\]
is a sentence defining $\mathcal{M}$.
\end{proof}

\begin{proposition}
\label{prop:operationsonclasses}
Let $\mathcal{M}_1,\mathcal{M}_2,\ldots,\mathcal{M}_n$ be (\mso/\cmso)\dash definable classes of set-systems.
Then the following classes are also (\mso/\cmso)\dash definable:
\begin{enumerate}[label = \textup{(\roman*)}]
\item
The complement of $\mathcal{M}_i$, for each $i$.
\item
$\displaystyle\bigcup_{i=1}^n \mathcal{M}_i$.
\item
$\displaystyle\bigcap_{i=1}^n \mathcal{M}_i$.
\end{enumerate}
\end{proposition}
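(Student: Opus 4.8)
The plan is to build the required sentences from the given ones using only the propositional connectives. By hypothesis, for each $i$ there is an (\mso/\cmso)\dash sentence $\varphi_i$ such that a set-system $M$ satisfies $\varphi_i$ if and only if $M\in\mathcal{M}_i$; this sentence lies in \mso\ precisely when the classes are being treated as \mso\dash definable, and in \cmso\ otherwise.

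For part (i) I would take $\neg\varphi_i$. By the formula-construction rules $\neg\varphi_i$ is again a sentence, with the same (empty) set of free variables as $\varphi_i$, and it lies in \mso\ (resp.\ \cmso) whenever $\varphi_i$ does. The satisfaction clause for negation gives that $M$ satisfies $\neg\varphi_i$ if and only if $M$ does not satisfy $\varphi_i$, i.e.\ if and only if $M$ is not in $\mathcal{M}_i$. For parts (ii) and (iii) I would use $\bigvee_{i=1}^{n}\varphi_i$ and $\bigwedge_{i=1}^{n}\varphi_i$, read as the left-nested iterations of $\lor$ and $\land$. Each intermediate expression is a legitimate formula: the conjunction rule only forbids a variable being free in one conjunct while bound in the other, and since every $\varphi_i$ is a sentence this situation never arises (and in any case bound variables may be renamed freely, as observed in Section~\ref{sect:prelims}); the same remark covers $\lor$, which abbreviates $\neg(\neg\,\cdot\,\land\,\neg\,\cdot\,)$. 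A routine induction on $n$ using the satisfaction clauses for $\land$ and $\lor$ then shows that $M$ satisfies $\bigwedge_{i}\varphi_i$ if and only if $M$ satisfies every $\varphi_i$, if and only if $M\in\bigcap_i\mathcal{M}_i$, and dually for $\bigvee_i\varphi_i$ and $\bigcup_i\mathcal{M}_i$. Since \mso\ and \cmso\ are each closed under $\neg$ and $\land$, and hence under $\lor$, all three sentences stay in the appropriate logic.

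I do not expect any genuine obstacle here: the statement is a direct formal consequence of the closure of the logics under the propositional connectives. The only point requiring (entirely routine) attention is the bound-variable bookkeeping involved in forming $\varphi_1\land\varphi_2$ and the iterated conjunctions and disjunctions, which the conventions of Section~\ref{sect:prelims} already dispose of.
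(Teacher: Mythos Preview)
Your proposal is correct and follows exactly the same approach as the paper: take $\neg\varphi_i$, $\bigvee_{i=1}^n\varphi_i$, and $\bigwedge_{i=1}^n\varphi_i$ as the defining sentences for the complement, union, and intersection respectively. The paper's proof is terser, omitting the bound-variable bookkeeping and closure remarks you spell out, but the content is identical.
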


\begin{proof}
For each $1\leq i\leq n$ let $\varphi_i$ be a sentence defining $\mathcal{M}_i$. Then
$\neg\varphi_i$ is a sentence defining the complement of $\mathcal{M}_i$.
The sentences
\[\ds\bigvee_{i=1}^n\varphi_i
\qquad\text{and}\qquad
\ds\bigwedge_{i=1}^n\varphi_i\]
define $\cup_{i=1}^n \mathcal{M}_i$ and $\cap_{i=1}^n\mathcal{M}_i$ respectively.
\end{proof}

The next result combines Propositions~~\ref{prop:singlematroid} and~\ref{prop:operationsonclasses}.

\begin{corollary}
\label{finite-class}
Any finite class of set-systems is \mso\dash definable.
\end{corollary}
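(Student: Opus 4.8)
The plan is to realise any finite class as a finite union of singleton classes and then invoke the two preceding propositions. Suppose $\mathcal{M}=\{M_1,\dots,M_n\}$ is a finite class of set-systems. For each $i$, Proposition~\ref{prop:singlematroid} provides an \mso\dash sentence $\varphi_i$ that is satisfied by a set-system if and only if that set-system equals $M_i$; equivalently, each singleton class $\{M_i\}$ is \mso\dash definable. Since $\mathcal{M}=\bigcup_{i=1}^n\{M_i\}$, applying Proposition~\ref{prop:operationsonclasses}(ii) shows that $\mathcal{M}$ is \mso\dash definable; concretely, the disjunction $\bigvee_{i=1}^n\varphi_i$ is a sentence defining $\mathcal{M}$.

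The only situation not literally covered by the above is the empty class, which is also finite. This is handled by exhibiting an \mso\dash sentence that no set-system satisfies, for instance $\exists X\,(X\ne X)$; alternatively one may adopt the convention that the empty disjunction $\bigvee_{i=1}^{0}\varphi_i$ is the constantly false sentence, folding this case into the general argument.

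There is essentially nothing to prove beyond assembling Propositions~\ref{prop:singlematroid} and~\ref{prop:operationsonclasses}, so no step is a genuine obstacle. The only point that warrants care is the reminder that ``finite class'' includes the empty class; a fully rigorous write-up should therefore either treat it separately or fix the empty-disjunction convention explicitly.
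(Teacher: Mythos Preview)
Your proof is correct and matches the paper's approach exactly: the paper simply states that the corollary combines Propositions~\ref{prop:singlematroid} and~\ref{prop:operationsonclasses}, and you have written out precisely that combination. Your explicit treatment of the empty class is a welcome bit of extra care that the paper leaves implicit.
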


\begin{proposition}
Let $\mathcal{M}$ be an (\mso/\cmso)\dash definable class of matroids.
The class of matroids that have at least one member of $\mathcal{M}$ as a minor is (\mso/\cmso)\dash definable.
\end{proposition}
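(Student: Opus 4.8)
The plan is to obtain the result as an essentially immediate application of Corollary~\ref{BTTcorollary} to the minor transduction. First I would invoke Proposition~\ref{transductionlist}(v), which supplies an \mso\dash transduction $\Lambda$ — in fact a \cmso\dash transduction — for which $\mathcal{T}_{\Lambda}(M)$ is empty when $M$ is not a matroid and otherwise equals the set of minors of $M$. Recall that matroids are identified here with their isomorphism classes, so $\mathcal{T}_{\Lambda}(M)$ contains exactly one copy of each minor of $M$.

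Next I would apply the ``at least one'' half of Corollary~\ref{BTTcorollary} to $\Lambda$ and $\mathcal{M}$, which yields that
\[
\mathcal{N}=\{M\ \text{a set-system}\colon M_{\rho}\in\mathcal{M}\ \text{for at least one}\ M_{\rho}\in\mathcal{T}_{\Lambda}(M)\}
\]
is \mso\dash definable; and since $\Lambda$ is a \cmso\dash transduction, the same corollary gives that $\mathcal{N}$ is \cmso\dash definable whenever $\mathcal{M}$ is \cmso\dash definable.

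The final step is to check that $\mathcal{N}$ is exactly the desired class. If $M$ is not a matroid then $\mathcal{T}_{\Lambda}(M)=\emptyset$, so $M\notin\mathcal{N}$; if $M$ is a matroid then, because $\mathcal{T}_{\Lambda}(M)$ is precisely the set of minors of $M$, membership of $M$ in $\mathcal{N}$ is equivalent to some minor of $M$ lying in $\mathcal{M}$. I expect the only point needing care — and it is a bookkeeping matter rather than a genuine obstacle, since the substantive work has already been carried out in Theorem~\ref{BTT}, Corollary~\ref{BTTcorollary}, and Proposition~\ref{transductionlist} — is the identification of the set-systems produced by $\Lambda$ with honest minors of $M$: an output $M_{\rho}$ is only guaranteed to be isomorphic to the corresponding minor, so one must appeal to the fact that membership in $\mathcal{M}$, being defined by a sentence, is invariant under isomorphism. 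If one wished to make the statement literally about matroids rather than set-systems, one could additionally intersect $\mathcal{N}$ with the \mso\dash definable class cut out by the sentence \formula{Matroid}, using Proposition~\ref{prop:operationsonclasses}, though this is unnecessary since $\mathcal{N}$ already contains no non-matroids.
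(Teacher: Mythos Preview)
Your proposal is correct and matches the paper's proof essentially line for line: the paper also invokes Proposition~\ref{transductionlist}(v) to obtain the minor transduction and then applies Corollary~\ref{BTTcorollary}. Your extra remarks about isomorphism invariance and intersecting with \formula{Matroid} are sound bookkeeping that the paper leaves implicit.
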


\begin{proof}
By Proposition~\ref{transductionlist}~(v) there is a \cmso\dash transduction $\Lambda$ such that for each matroid $M$, $\mathcal{T}_\Lambda(M)$ contains a matroid isomorphic to $N$ if and only if $N$ is isomorphic to a minor of $M$. The result now follows by Corollary~\ref{BTTcorollary}.
\end{proof}

\begin{proposition}
\label{definable-excluded}
Let $\mathcal{M}$ be a minor-closed class of matroids.
Then $\mathcal{M}$ is (\mso/\cmso)\dash definable if and only if the class of excluded minors for $\mathcal{M}$ is (\mso/\cmso)\dash definable.
\end{proposition}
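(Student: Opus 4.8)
The plan is to prove both directions by combining the transduction machinery with the fact that ``being a minor of'' is captured by an \mso-transduction. For the forward direction, suppose $\mathcal{M}$ is definable by a sentence $\varphi$. The excluded minors for $\mathcal{M}$ are exactly those matroids $N$ such that $N\notin\mathcal{M}$ but every proper minor of $N$ lies in $\mathcal{M}$. The second condition is handled by Proposition~\ref{transductionlist}~(vii): there is a \cmso-transduction $\Lambda$ with $\mathcal{T}_\Lambda(N)$ equal to the set of proper minors of $N$, so by Corollary~\ref{BTTcorollary} the class $\{N\colon N_\rho\in\mathcal{M}\text{ for all }N_\rho\in\mathcal{T}_\Lambda(N)\}$ is definable. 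Intersecting this with the complement of $\mathcal{M}$ (definable by Proposition~\ref{prop:operationsonclasses}~(i)) and with the class of matroids (definable via the sentence \formula{Matroid}) yields a sentence for the class of excluded minors.

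For the reverse direction, suppose the set $\mathcal{E}$ of excluded minors is definable by a sentence $\psi$. Since $\mathcal{M}$ is minor-closed, a matroid $M$ lies in $\mathcal{M}$ if and only if $M$ has no member of $\mathcal{E}$ as a minor. By Proposition~\ref{transductionlist}~(v) there is a \cmso-transduction $\Lambda'$ with $\mathcal{T}_{\Lambda'}(M)$ equal to the set of minors of $M$; applying Corollary~\ref{BTTcorollary} to the class $\overline{\mathcal{E}}$ (the complement of $\mathcal{E}$, definable by Proposition~\ref{prop:operationsonclasses}~(i)) shows that $\{M\colon M_\rho\in\overline{\mathcal{E}}\text{ for all }M_\rho\in\mathcal{T}_{\Lambda'}(M)\}$ is definable. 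Intersecting with the class of matroids gives a sentence defining $\mathcal{M}$. Throughout I should remark that in the \mso\ case one uses the \mso-transductions of Proposition~\ref{transductionlist} and the \mso\ versions of the corollaries, so the statement holds uniformly for both logics.

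The one point requiring a little care is the identification of classes with isomorphism classes: $\mathcal{T}_\Lambda(M)$ contains matroids \emph{isomorphic} to the proper minors of $M$, not literal minors, and ``definable'' means closed under isomorphism of set-systems. Since $\mathcal{M}$ being a class of matroids is understood to be isomorphism-closed (excluded minors are taken up to isomorphism), and since definability via a sentence is automatically isomorphism-invariant, this matches up correctly — but I would state it explicitly. I do not expect any genuine obstacle here; the proposition is essentially a direct corollary of Proposition~\ref{transductionlist}, Corollary~\ref{BTTcorollary}, and Proposition~\ref{prop:operationsonclasses}, and the main work has already been done in establishing those. The only mildly delicate step is making sure the ``for all members of $\mathcal{T}_\Lambda$'' quantification in Corollary~\ref{BTTcorollary} lines up with the universal quantification over proper minors (forward direction) and over all minors (reverse direction), which is exactly what the two transductions in Proposition~\ref{transductionlist}~(v) and~(vii) deliver.
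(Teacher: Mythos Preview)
Your proposal is correct and follows essentially the same route as the paper: for the forward direction you use the proper-minors transduction of Proposition~\ref{transductionlist}(vii) together with Corollary~\ref{BTTcorollary} and conjoin with $\neg\varphi$, and for the converse you use the all-minors transduction of Proposition~\ref{transductionlist}(v) and Corollary~\ref{BTTcorollary} applied to $\neg\psi$. The paper's proof is terser and omits your explicit intersection with \formula{Matroid} (since the transductions already have \formula{Matroid} built into their \formula{Domain}) and your isomorphism remark, but the argument is the same.
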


\begin{proof}
Assume $\mathcal{M}$ is definable via the sentence $\psi$.
Then the complement of $\mathcal{M}$ is defined by the sentence $\neg \psi$.
Now $M$ is an excluded minor if and only if it is minor-minimal in this complement; that is, $M$ does not satisfy $\psi$ but all of its proper minors do.
Let $\Lambda$ be the transduction provided by Proposition~\ref{transductionlist}~(vii), so that $\mathcal{T}_{\Lambda}(M)$ will contain an isomorphic copy of each proper minor of $M$.
Now Corollary~\ref{BTTcorollary} implies that there is a sentence $\varphi$ so that $M$ satisfies $\varphi$ if and only if every proper minor of $M$ satisfies $\psi$.
Then the sentence that defines the excluded minors of $\mathcal{M}$ is $\neg\psi\land \varphi$.

For the converse, assume that the family of excluded minors is defined by the \mso\dash sentence $\psi$.
By using Proposition~\ref{transductionlist}~(v) and Corollary~\ref{BTTcorollary} we see that there is a sentence which is satisfied by a matroid $M$ if and only if every minor of $M$ satisfies $\neg\psi$.
Then $M$ will satisfy this sentence if and only if it has no minor isomorphic to an excluded minor, which is true if and only if $M$ is in $\mathcal{M}$.
\end{proof}

Now we can deduce the known fact that any minor-closed class of matroids with only finitely many excluded minors is \mso\dash definable.

For convenience, we explicitly state the following consequence of Corollary~\ref{BTTcorollary} and Proposition~\ref{transductionlist}~(i).

\begin{corollary}
\label{dual-definable}
Let $\mathcal{M}$ be a class of matroids.
Then $\mathcal{M}$ is \mso\dash definable if and only if $\{M^{*}\colon M\in\mathcal{M}\}$ is \mso\dash definable.
The same statement applies for \cmso\dash definability.
\end{corollary}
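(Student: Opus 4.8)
The plan is to derive this directly from Corollary~\ref{BTTcorollary} applied to the dual transduction. First I would invoke Proposition~\ref{transductionlist}~(i) to obtain a \cmso\dash transduction $\Lambda$ with the property that, for every set-system $M$, the image $\mathcal{T}_{\Lambda}(M)$ is empty when $M$ is not a matroid and otherwise consists of a single set-system isomorphic to $M^{*}$. Because $\mathcal{T}_{\Lambda}(M)$ has at most one member, the ``for all'' and ``for at least one'' classes in Corollary~\ref{BTTcorollary} coincide; applying the corollary to the given definable class $\mathcal{M}$ then produces an \mso\dash sentence (or a \cmso\dash sentence, when $\mathcal{M}$ is \cmso\dash definable, since $\Lambda$ is a \cmso\dash transduction) defining
\[
\mathcal{N}=\{M\ \text{a set-system}\colon M_{\rho}\in\mathcal{M}\ \text{for at least one}\ M_{\rho}\in\mathcal{T}_{\Lambda}(M)\}.
\]

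The next step is to identify $\mathcal{N}$ with $\{M^{*}\colon M\in\mathcal{M}\}$. If $M$ is not a matroid then $\mathcal{T}_{\Lambda}(M)=\emptyset$, so $M\notin\mathcal{N}$, and also $M\notin\{M^{*}\colon M\in\mathcal{M}\}$ because the right-hand side contains only matroids. If $M$ is a matroid, then $M\in\mathcal{N}$ exactly when $M^{*}\in\mathcal{M}$; since we identify matroids with their isomorphism classes and $(M^{*})^{*}=M$, this is the same as saying $M\in\{N^{*}\colon N\in\mathcal{M}\}$. Hence $\mathcal{N}=\{M^{*}\colon M\in\mathcal{M}\}$ is definable, which gives one direction of the equivalence.

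For the converse I would use that $\mathcal{M}\mapsto\{M^{*}\colon M\in\mathcal{M}\}$ is an involution on classes of matroids: if $\{M^{*}\colon M\in\mathcal{M}\}$ is definable, the direction already established (applied to this class) shows that $\mathcal{M}=\{N^{*}\colon N\in\{M^{*}\colon M\in\mathcal{M}\}\}$ is definable. The same reasoning applies line by line in the \cmso\ case. I do not expect any real obstacle here; the only delicate point is the behaviour on non-matroids, where $\mathcal{T}_{\Lambda}$ is empty, which is why it is convenient to use the ``at least one'' form of Corollary~\ref{BTTcorollary} and to recall throughout that the classes involved are classes of matroids closed under isomorphism.
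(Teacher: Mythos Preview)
Your approach is exactly what the paper intends: it states the corollary as an immediate consequence of Corollary~\ref{BTTcorollary} and Proposition~\ref{transductionlist}~(i), and you have correctly unpacked that. One small slip: the ``for all'' and ``for at least one'' classes do \emph{not} coincide on non-matroids (where $\mathcal{T}_{\Lambda}(M)=\emptyset$, so the universal condition holds vacuously while the existential fails), but you do not actually use this claim---you work with the existential form and handle the non-matroid case explicitly, so the argument stands.
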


\subsection{Sums}

Recall that if $M_{1}=(E_{1},\mathcal{I}_{1})$ and $M_{2}=(E_{2},\mathcal{I}_{2})$ are matroids with disjoint ground sets, then
\[M_{1}\oplus M_{2}=(E_{1}\cup E_{2},\{I_{1}\cup I_{2}\colon I_{1}\in\mathcal{I}_{1},\ I_{2}\in\mathcal{I}_{2}\})\]
is the \emph{direct sum} of $M_{1}$ and $M_{2}$.

\begin{proposition}
Let $\mathcal{M}_{1}$ and $\mathcal{M}_{2}$ be (\mso/\cmso)\dash definable classes.
Then \[\mathcal{M}_{1}\oplus \mathcal{M}_{2}:=\{M_{1}\oplus M_{2}\colon M_{1}\in \mathcal{M}_{1},\ M_{2}\in\mathcal{M}_{2}\}\] is (\mso/\cmso)\dash definable.
\end{proposition}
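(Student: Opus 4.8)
The plan is to express membership in $\mathcal{M}_{1}\oplus\mathcal{M}_{2}$ as the existence of a partition of the ground set into two separators whose corresponding restrictions lie in $\mathcal{M}_{1}$ and $\mathcal{M}_{2}$. Let $\varphi_{1}$ and $\varphi_{2}$ be sentences defining $\mathcal{M}_{1}$ and $\mathcal{M}_{2}$, and let $\Lambda$ be the restriction transduction of Example~\ref{examplerestrictions}, whose single domain variable we call $Z$; recall that for a matroid $M$ and $S\subseteq E$ the output $M_{\rho}$ attached to the interpretation $\rho(Z)=S$ is (isomorphic to) $M|S$, the empty case $S=\emptyset$ yielding the empty matroid, and that $\Lambda$ is a \cmso\dash transduction. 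Applying Theorem~\ref{BTT} to each $\varphi_{i}$ — a sentence, so its free-variable set $\mathcal{X}$ is empty — produces a formula $\varphi_{i}^{\Lambda}$ whose only free variable is $Z$ and which is satisfied by $(M,\{(Z,S)\})$ if and only if $M$ is a matroid and $M|S\in\mathcal{M}_{i}$. (Here we use that any definable class is closed under isomorphism, so that $M|S\in\mathcal{M}_{i}$ does not depend on the identification blurred in the definition of a transduction.) When the $\mathcal{M}_{i}$ are \cmso\dash definable, the $\varphi_{i}^{\Lambda}$ may be taken to be \cmso\dash formulas since $\Lambda$ is a \cmso\dash transduction.

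Next I would rename variables, setting $\psi_{1}[A]:=\varphi_{1}^{\Lambda}[Z\rightharpoonup A]$ and $\psi_{2}[B]:=\varphi_{2}^{\Lambda}[Z\rightharpoonup B]$, and introduce an auxiliary formula, say $\formula{SumSplit}[A,B]$, equal to
\[
\formula{Bipartition}[A,B]\land\forall C\ (\formula{Circuit}[C]\to(C\subseteq A\lor C\subseteq B)),
\]
which, when $M$ is a matroid, holds precisely when $\{A,B\}$ is a partition of $E$ into two (possibly empty) separators, equivalently precisely when $M=M|A\oplus M|B$. The claim would then be that
\[
\formula{Matroid}\land\exists A\,\exists B\ \bigl(\formula{SumSplit}[A,B]\land\psi_{1}[A]\land\psi_{2}[B]\bigr)
\]
defines $\mathcal{M}_{1}\oplus\mathcal{M}_{2}$, and is an \mso\dash sentence (respectively \cmso\dash sentence) when both $\mathcal{M}_{i}$ are \mso\dash definable (respectively \cmso\dash definable). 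To verify this I would argue both directions: if $M=M_{1}\oplus M_{2}$ with $M_{i}\in\mathcal{M}_{i}$, then $A=E(M_{1})$ and $B=E(M_{2})$, viewed inside $E(M)$, partition $E(M)$ into separators with $M|A=M_{1}$ and $M|B=M_{2}$, so all three conjuncts hold; conversely, a satisfying interpretation supplies a partition of $E$ into separators $A,B$ with $M|A\in\mathcal{M}_{1}$ and $M|B\in\mathcal{M}_{2}$, and since $A$ is a separator $M=M|A\oplus M|B\in\mathcal{M}_{1}\oplus\mathcal{M}_{2}$.

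There is no serious obstacle here; the argument is a routine application of the Backwards Translation Theorem. The one point that must not be overlooked is the separator condition inside $\formula{SumSplit}$: without requiring that the bipartition $\{A,B\}$ genuinely arises from a direct-sum decomposition, the sentence would wrongly accept matroids with a restriction in $\mathcal{M}_{1}$ and a complementary restriction in $\mathcal{M}_{2}$ that do not reassemble to the original matroid. The remaining checks are routine: that $\formula{SumSplit}$ behaves correctly when $A$ or $B$ is empty (the corresponding restriction is then the empty matroid, which $\Lambda$ does output), and that definability gives closure under isomorphism so the identifications in the transduction formalism cause no trouble.
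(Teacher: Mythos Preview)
Your proposal is correct and follows essentially the same route as the paper: apply the Backwards Translation Theorem to the restriction transduction of Example~\ref{examplerestrictions} to obtain formulas $\varphi_i^{\Lambda}$ with a single free variable, then existentially quantify over a bipartition of the ground set into two separators whose restrictions satisfy these formulas. The only cosmetic differences are that the paper packages the separator condition via the predefined $\formula{Separator}$ formula (which, incidentally, excludes the empty set, a wrinkle you handle explicitly) rather than your direct circuit-containment clause, and omits the redundant leading $\formula{Matroid}$ conjunct since it is already forced by $\formula{Domain}$ inside $\varphi_i^{\Lambda}$.
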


\begin{proof}
For $i=1,2$ let $\varphi_i$ be a sentence defining $\mathcal{M}_i$, and let $\Lambda$ be the \cmso\dash transduction $(\formula{Domain},\formula{NewElement},\formula{NewIndep})$ of Example~\ref{examplerestrictions} such that $\mathcal{T}_\Lambda(M)$ is the set of restrictions of $M$ to subsets of the ground set.
Note that the only free variable of \formula{Domain} is $Z_{2}$, which is interpreted as the subset we restrict to.
Thus $\varphi_{i}^{\Lambda}$ is a formula with the single free variable $Z_{2}$, for $i=1,2$.

Define the formula $\formula{DisjointSum}[X_1,X_2]$ by
\[
\formula{Bipartition}[X_1,X_2] \wedge \formula{Separator}[X_1] \wedge \formula{Separator}[X_2].
\]
Then $\formula{DisjointSum}[X_1,X_2]$ is satisfied if and only if $M$ is the sum of its restrictions to $X_1$ and $X_2$. Now define $\psi$ by
\[
\exists X_1\exists X_2\ (\formula{DisjointSum}[X_1,X_2]\land \varphi_1^\Lambda[Z_{2}\rightharpoonup X_{1}] \land\varphi_2^\Lambda[Z_{2}\rightharpoonup X_{2}]).
\]
We note that $\varphi_{i}^\Lambda[Z_{2}\rightharpoonup X_{i}]$ is satisfied when the restriction to $X_{i}$ satisfies $\varphi_{i}$.
Thus $\psi$ is a sentence defining $\mathcal{M}_1\oplus\mathcal{M}_2$.
\end{proof}

\subsection{Extensions and coextensions}

We let the formula $\formula{Coloop}[X]$ be
\[
\formula{Sing}[X]\land \forall B\ (\formula{Basis}[B]\to X\subseteq B).
\]
In the case that $M$ is a matroid, $\formula{Coloop}[X]$ is satisfied if and only if $X$ is a singleton set containing a coloop of $M$.

Let $M$ be a matroid with ground set $E$.
An element $e\in E$ is \emph{free} in $M$ if it is not a coloop and the only circuits that contain $e$ are spanning circuits.
In this case we say that $M$ is obtained from $M\backslash e$ by a \emph{free extension}.
If $e$ is free in $M^{*}$, then $M$ is obtained from $M/e$ by a \emph{cofree coextension}.
Note that in this case, $M$ is produced from $M/e$ by dualising, applying a free extension, and then dualising again.
Let $\formula{Free}[X]$ be the formula
\[
\formula{Sing}[X]\land\neg\formula{Coloop}[X]\land \forall C\ ((\formula{Circuit}[C]\land X\subseteq C)\to \formula{Spanning}[C]).
\]
If $M$ is a matroid, $\formula{Free}[X]$ will be satisfied if and only if $X$ is a singleton set containing a free element.

\begin{proposition}
\label{transduction-extensions}
In the following cases there is a \cmso\dash transduction $\Lambda$ such that for every set-system $M=(E,\mathcal{I})$ the image $\mathcal{T}_{\Lambda}(M)$ is empty if $M$ is not a matroid, and is otherwise as described below.
\begin{enumerate}[label = \textup{(\roman*)}]
\item $\mathcal{T}_{\Lambda}(M)$ is the set of matroids that can be produced from $M$ by deleting a coloop.
\item $\mathcal{T}_{\Lambda}(M)$ is the set of matroids that can be produced from $M$ by deleting a free element.
\end{enumerate}
\end{proposition}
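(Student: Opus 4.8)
The plan is to exhibit both transductions explicitly, exploiting the fact that deletion of a single element has an especially simple description at the level of set-systems. Recall that if $e$ is an element of a matroid $M=(E,\mathcal{I})$, then $M\backslash e$ has ground set $E-\{e\}$, and a subset $I\subseteq E-\{e\}$ is independent in $M\backslash e$ if and only if $I\in\mathcal{I}$. Consequently the requirement that the deleted element be a coloop (respectively, free) affects only \emph{which} elements may be deleted, and this restriction can be enforced entirely inside \formula{Domain} using the formulas $\formula{Coloop}[X]$ and $\formula{Free}[X]$ established in Section~\ref{sect:prelims}.

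For part~(i) I would take $\mathcal{Z}=\{Z\}$ and set
\[
\Lambda=\bigl(\formula{Matroid}\land\formula{Coloop}[Z],\ \sing{T}\land\formula{Disjoint}[T,Z],\ \formula{Disjoint}[T,Z]\land\indep{T}\bigr).
\]
A function $\rho$ satisfies \formula{Domain} exactly when $M$ is a matroid and $\rho(Z)=\{e\}$ for some coloop $e$ of $M$, so there is precisely one satisfying interpretation for each coloop of $M$. Fixing such a $\rho$, the set $E_{\rho}$ consists of all singletons $\{f\}$ with $f\in E-\{e\}$; in particular every member of $E_{\rho}$ is a singleton, which is exactly the condition needed for $\Lambda$ to be a \cmso\dash transduction, and under the usual identification we regard $E_{\rho}$ as $E-\{e\}$. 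For $I\subseteq E_{\rho}$ we have $\Lambda^{-1}_{\rho}(I)=\bigcup_{Y\in I}Y\subseteq E-\{e\}$, and $I\in\mathcal{I}_{\rho}$ if and only if $\bigcup_{Y\in I}Y$ is independent in $M$; comparing with the description of deletion above, $M_{\rho}$ is isomorphic to $M\backslash e$. If $M$ is not a matroid then no interpretation satisfies \formula{Domain}, so $\mathcal{T}_{\Lambda}(M)=\emptyset$; and if $M$ is a matroid then $\mathcal{T}_{\Lambda}(M)$ is exactly the set of matroids obtainable from $M$ by deleting a coloop, which is empty when $M$ has no coloop.

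For part~(ii) I would use the identical \formula{NewElement} and \formula{NewIndep} and replace only $\formula{Coloop}[Z]$ by $\formula{Free}[Z]$ inside \formula{Domain}. The verification is word-for-word the same, since the description of deletion recalled above applies to any single element, free or not; the only change is that the satisfying interpretations of \formula{Domain} now correspond to the free elements of $M$.

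I do not expect a substantive obstacle: the mathematical content amounts to having the formulas $\formula{Coloop}$ and $\formula{Free}$ on hand together with the elementary observation that deleting one element leaves the independence of the remaining subsets unchanged. The only points meriting a moment's care are confirming that every free variable of \formula{NewElement} and \formula{NewIndep} other than $T$ lies in $\mathcal{Z}=\{Z\}$, as the definition of a transduction requires, and that the $\sing{T}$ conjunct in \formula{NewElement} is what makes $\Lambda$ a \cmso\dash transduction rather than merely an \mso\dash transduction.
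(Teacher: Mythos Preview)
Your proposal is correct and follows essentially the same approach as the paper. The only cosmetic difference is that the paper lets the free variable $Z$ of \formula{Domain} stand for the \emph{complement} of the coloop (via $\exists W\,(\formula{Coloop}[W]\land\formula{Bipartition}[W,Z])$) and then uses $T\subseteq Z$ in place of your $\formula{Disjoint}[T,Z]$, whereas you let $Z$ be the coloop singleton itself; the two formulations are trivially interchangeable.
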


\begin{proof}
We start with (i).
Let \formula{Domain} be
\[
\formula{Matroid}\land \exists W\ (\formula{Coloop}[W]\land \formula{Bipartition}[W,Z]).
\]
This formula will be satisfied exactly when $M$ is a matroid and $Z$ is the complement of a coloop.
Now we set $\formula{NewElement}$ to be $T\subseteq Z\land \formula{Sing}[T]$ and we set $\formula{NewIndep}$ to be $T\subseteq Z\land \formula{Indep}[T]$.
If $\rho$ is a satisfying assignment for $\formula{Domain}$, then $M_{\rho}$ is the matroid produced from $M$ by restricting to $Z$, which is to say, the matroid produced from $M$ by deleting the unique element not in $Z$.
This proves statement (i).
We prove (ii) by simply changing $\formula{Domain}$ by substituting $\formula{Free}$ for $\formula{Coloop}$.
\end{proof}

Let $C$ be a circuit of the matroid $M$.
Assume that whenever $C'$ is a circuit such that $C'\ne C$ and $C'\cap C\ne \emptyset$, then $C'$ is spanning.
In this case we say that $C$ is \emph{freely placed}.

\begin{proposition}
\label{Bn-transduction}
There is a \cmso\dash transduction $\Lambda$ so that $\mathcal{T}_{\Lambda}(M)$ is non-empty only if $M$ is a matroid with at least one freely placed and coindependent non-spanning circuit containing at least two elements and in this case, $\mathcal{T}_{\Lambda}(M)$ is exactly the set of matroids obtained from $M$ by deleting such a circuit.
\end{proposition}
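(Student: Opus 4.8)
The plan is to imitate the construction in Proposition~\ref{transduction-extensions}, replacing the coloop there by a freely placed circuit. First I would assemble an auxiliary \cmso\dash formula $\formula{GoodCircuit}[W]$ that, when $M$ is a matroid, is satisfied exactly by the subsets $W$ that are freely placed, coindependent, non\dash spanning circuits with at least two elements. Each of the four constraints is already expressible: being a circuit is $\formula{Circuit}[W]$; having at least two elements is $\neg\formula{Sing}[W]$ (a circuit is already nonempty); coindependence is $\formula{Coindep}[W]$; being non\dash spanning is $\neg\formula{Spanning}[W]$; and ``freely placed'' unpacks directly, from the definition given just before the statement, to
\[
\forall C\ ((\formula{Circuit}[C]\land C\ne W\land \neg\formula{Disjoint}[C,W])\to \formula{Spanning}[C]).
\]
Taking $\formula{GoodCircuit}[W]$ to be the conjunction of these five formulas yields the required formula, and verifying that it captures the intended notion is immediate from the semantics of the constituent formulas established in Section~\ref{sect:prelims}.

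Next I would set $\formula{Domain}[Z]$ to be
\[
\formula{Matroid}\land \exists W\ (\formula{GoodCircuit}[W]\land \formula{Bipartition}[W,Z]),
\]
so that a satisfying interpretation $\rho$ of \formula{Domain} exists precisely when $M$ is a matroid possessing such a circuit $W$, and in that case $\rho(Z)$ is the complement of $W$, namely the ground set of $M\backslash W$. I would then take $\formula{NewElement}$ to be $\formula{Sing}[T]\land T\subseteq Z$ and $\formula{NewIndep}$ to be $\formula{Indep}[T]\land T\subseteq Z$, exactly as in Example~\ref{examplerestrictions} and Proposition~\ref{transduction-extensions}. Since the ground set of the deletion $M\backslash W$ is $E-W=\rho(Z)$, and a subset of $E-W$ is independent in $M\backslash W$ if and only if it is independent in $M$, the output $M_{\rho}$ associated with $\rho$ is isomorphic to $M\backslash W$. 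Because $\formula{NewElement}$ forces every member of $E_{\rho}$ to be a singleton set, $\Lambda=(\formula{Domain},\formula{NewElement},\formula{NewIndep})$ is a \cmso\dash transduction.

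Finally I would verify the two containments behind ``$\mathcal{T}_{\Lambda}(M)$ is exactly the set of matroids obtained from $M$ by deleting such a circuit'': every satisfying interpretation $\rho$ arises from some good circuit $W=E-\rho(Z)$, so $M_{\rho}\cong M\backslash W$; and conversely, for each good circuit $W$ the interpretation $\rho$ with $\rho(Z)=E-W$ satisfies \formula{Domain} and produces (a copy of) $M\backslash W$. I do not anticipate a genuine obstacle; the step requiring the most care is checking that $\formula{GoodCircuit}$ simultaneously and correctly encodes all four adjectives on the circuit, and in particular that ``freely placed'' has been transcribed faithfully — noting that the universally quantified $C$ in that clause ranges over \emph{all} circuits other than $W$ that meet $W$, including non\dash spanning ones, which is exactly what the definition demands. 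After that, the argument follows the template of Example~\ref{examplerestrictions} and Proposition~\ref{transduction-extensions} essentially verbatim.
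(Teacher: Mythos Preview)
Your construction is correct and is essentially identical to the paper's: both define \formula{Domain} so that $Z$ is the complement of a circuit that is non-spanning, coindependent, not a singleton, and freely placed, and both take \formula{NewElement} and \formula{NewIndep} to be the restriction formulas $\formula{Sing}[T]\land T\subseteq Z$ and $\formula{Indep}[T]\land T\subseteq Z$. The only cosmetic differences are that the paper inlines the conditions rather than naming an auxiliary formula $\formula{GoodCircuit}$, and expresses ``at least two elements'' as $\forall W\,(\formula{Sing}[W]\to W\ne C_{1})$ rather than your equivalent $\neg\formula{Sing}[W]$.
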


\begin{proof}
We set \formula{Domain} to be
\begin{align*}
\formula{Matroid}&\land \exists C_{1}\ (\formula{Bipartition}[Z,C_{1}]\land \formula{Circuit}[C_{1}]\land \neg\formula{Spanning}[C_{1}]\land\formula{Coindep}[C_{1}]\\
&\land\forall W\ (\formula{Sing}[W]\to W\ne C_{1})\\
&\land\forall C_{2}\ ((\formula{Circuit}[C_{2}]\land\neg\formula{Disjoint}[C_{1},C_{2}]\land C_{1}\ne C_{2})\to \formula{Spanning}[C_{2}])).
\end{align*}
We then set $\formula{NewElement}$ to be $T\subseteq Z\land \formula{Sing}[T]$ and we set $\formula{NewIndep}$ to be $T\subseteq Z\land \formula{Indep}[T]$.
\end{proof}

\section{Definable classes}
\label{sect:classes}

In this section we are going to use our assembled tools to show that certain natural classes of matroids can be defined by sentences in \mso.

\subsection{Lattice-path matroids}

The class of lattice-path matroids was introduced by Bonin, de Mier, and Noy~\cite{MR2018421}.
Its importance arises from the fact that it is a subclass of transversal matroids that is closed under duality and taking minors.
(The entire class of transversal matroids is closed under neither of these operations.)
For any non-negative integer $n$ let $\{E,N\}^{n}$ be the set of words of $n$ characters chosen from the alphabet $\{E,N\}$.
Let $P=p_{1}p_{2},\ldots, p_{n}$ be any such word.
If $i\in \{1,2,\ldots, n\}$ then $n_{i}(P)$ is the number of $N$\dash characters in the first $i$ characters of $P$.
We let $N(P)$ be $\{i\in\{1,2,\ldots, n\}\colon p_{i}=N\}$.
We can identify $P$ with a walk in the integer grid from $(0,0)$ to $(n-|N(P)|,|N(P)|)$ using North and East integer steps.
Let $P$ and $Q$ be words in $\{E,N\}^{n}$ such that $|N(P)|=|N(Q)|$.
We write $P\preccurlyeq Q$ to mean that $n_{i}(P)\leq n_{i}(Q)$ for each $i=1,2,\ldots, n$.
If $L$ is another word in $\{E,N\}^{n}$ such that $|N(L)|=|N(P)|=|N(Q)|$ then we say $L$ is an \emph{intermediate path} if $P\preccurlyeq L\preccurlyeq Q$.
The family
\[
\{N(L)\colon L\ \text{is an intermediate path}\}
\]
is the family of bases of a matroid on the ground set $\{1,2,\ldots, n\}$.
We denote this matroid by $M[P,Q]$.
The matroids that arise in this way are exactly the \emph{lattice-path matroids}.

The family of lattice-path matroids is closed under minors~\cite{MR2215428}*{Theorem~3.1}, but has infinitely many excluded minors.
Nonetheless, the excluded minors have been characterised by Bonin~\cite{MR2718679} and we now describe them.

When $n\geq 2$, we define $P_{n}$ to be $T_{n}(U_{n-1,n}\oplus U_{n-1,n})$, the truncation to rank $n$ of the direct sum of two $n$\dash element circuits.
We can also define $P_{n}$ by saying that it is a rank\dash $n$ matroid with a ground set that is partitioned into two circuit-hyperplanes, where any non-spanning circuit is equal to one of these circuit-hyperplanes.
The rank-two matroid $P_{2}$ is the direct sum $U_{1,2}\oplus U_{1,2}$.
It is easy to see that $P_{n}$ is equal to its own dual.

\begin{proposition}
\label{Pn-definable}
Let $K \geq 2$ be an integer.
The class $\{P_{n}\colon n\geq K\}$ is \mso\dash definable.
\end{proposition}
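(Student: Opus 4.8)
The plan is to exhibit an explicit \mso\dash sentence built from formulas already assembled in Section~\ref{sect:prelims}. The natural starting point is the second characterisation of $P_{n}$ recalled above: a matroid $M$ is isomorphic to $P_{n}$ for some $n\geq 2$ precisely when its ground set partitions into two circuit-hyperplanes and every non-spanning circuit of $M$ is one of these two sets. So most of the work is a short matroid-theoretic check that these conditions really do pin down $\{P_{n}\colon n\geq 2\}$; after that we simply conjoin a formula forcing $n\geq K$.

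Concretely, I would first prove the following. Let $M=(E,\mathcal{I})$ be a matroid with $E=X_{1}\cup X_{2}$, where $X_{1}\cap X_{2}=\emptyset$ and each $X_{i}$ is simultaneously a circuit and a hyperplane, and suppose every non-spanning circuit of $M$ equals $X_{1}$ or $X_{2}$. Then $M\cong P_{r}$, where $r=r(M)$, and $r\geq 2$. The proof is a rank count: since $X_{i}$ is a hyperplane, $r(X_{i})=r-1$, and since $X_{i}$ is a circuit, $|X_{i}|=r(X_{i})+1=r$, so $|E|=2r$. If $B\subseteq E$ with $|B|=r$ and $B\notin\{X_{1},X_{2}\}$, then any circuit inside $B$ has at most $r$ elements; a non-spanning one would have to be $X_{1}$ or $X_{2}$ and hence equal $B$, and a spanning one has $r+1>|B|$ elements. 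So every such $B$ is independent, hence a basis, while $X_{1},X_{2}$ are dependent. Therefore the bases of $M$ are exactly the $r$-element subsets of $E$ other than $X_{1}$ and $X_{2}$, which is precisely the basis family of $P_{r}$ on the bipartitioned ground set $X_{1}\cup X_{2}$. (That $r\geq 2$ follows because $r\leq 1$ forces either $X_{i}=\emptyset$, which is not a circuit, or $|E|=2$ with both elements loops, giving $r=0$.) Conversely, $P_{n}$ itself satisfies the three conditions with $X_{1},X_{2}$ its two circuit-hyperplanes, whose only non-spanning circuits are $X_{1},X_{2}$.

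It remains to turn this into an \mso\dash sentence and add the cardinality bound. Let $\chi_{K}[X]$ be
\[
\exists Y_{1}\cdots\exists Y_{K}\ \Bigl(\bigwedge_{i=1}^{K}(\sing{Y_{i}}\land Y_{i}\subseteq X)\ \land\ \bigwedge_{1\leq i<j\leq K}Y_{i}\neq Y_{j}\Bigr),
\]
which holds exactly when $|X|\geq K$, and recall $\formula{CircHyp}[X]=\formula{Circuit}[X]\land\formula{Hyperplane}[X]$ from Example~\ref{examplerelaxation}. Then $\{P_{n}\colon n\geq K\}$ is defined by
\begin{multline*}
\formula{Matroid}\land\exists X_{1}\exists X_{2}\ \bigl(\formula{Bipartition}[X_{1},X_{2}]\land\formula{CircHyp}[X_{1}]\land\formula{CircHyp}[X_{2}]\land\chi_{K}[X_{1}]\land{}\\
\forall C\ ((\formula{Circuit}[C]\land\neg\formula{Spanning}[C])\to(C=X_{1}\lor C=X_{2}))\bigr).
\end{multline*}
A matroid satisfying this sentence has witnesses $X_{1},X_{2}$ meeting the hypotheses of the lemma, so it is isomorphic to $P_{r}$ with $r=|X_{1}|\geq K$ by $\chi_{K}[X_{1}]$; and $P_{n}$ with $n\geq K$ satisfies it. The one genuine obstacle is the lemma --- in particular, seeing that the three structural requirements leave the basis family completely determined --- but this is the short rank computation above; the rest is assembling previously defined formulas, with $\chi_{K}$ supplying the ``at least $K$ elements'' constraint via $K$ nested existential quantifiers.
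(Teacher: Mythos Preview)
Your proposal is correct and follows essentially the same approach as the paper: both write down the \mso\dash sentence asserting that $M$ is a matroid whose ground set is bipartitioned into two circuit-hyperplanes with no other non-spanning circuits, together with a size/rank lower bound. The only cosmetic differences are that you encode ``$n\geq K$'' via $\chi_{K}[X_{1}]$ (i.e.\ $|X_{1}|\geq K$) whereas the paper uses ``there is a basis containing $K$ disjoint singletons'' (i.e.\ $r(M)\geq K$) --- equivalent here since $|X_{1}|=r(M)$ --- and you spell out the verification that these conditions pin down $P_{r}$, which the paper takes as given from its alternative description of $P_{n}$.
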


\begin{proof}
The \mso\dash sentence that defines $\{P_{n}\colon n\geq K\}$ will be a conjunction with one term equal to \formula{Matroid}, so we may as well assume that the set-system $M=(E,\mathcal{I})$ is a matroid.
We can require that $M$ has rank at least $K$ by asserting that there exists a basis that contains $K$ pairwise disjoint singleton sets, so now we will assume that $r(M)\geq K$.
We complete the sentence by taking the conjunction with the following:
\begin{multline*}
\exists X_{1}\exists X_{2}\ (\formula{Bipartition}[X_{1},X_{2}]\land \formula{CircHyp}[X_{1}]\land \formula{CircHyp}[X_{2}]\land\\
\forall C\ ((\formula{Circuit}[C]\land \neg\formula{Spanning}[C])\to C=X_{1}\lor C=X_{2})).\qedhere
\end{multline*}
\end{proof}

Let $M$ be a rank\dash $n$ matroid where $n\geq 3$ and let $e$ and $f$ be distinct elements of $E(M)$ such that $e$ is free in $M$, $f$ is free in $(M\backslash e)^{*}$, and $(M\backslash e)/f$ is isomorphic to $P_{n-1}$.
In this case we denote $M$ by $A_{n}$.
The matroid $A_{3}$ is also known as $Q_{6}$.
It is easy to see that $A_{n}$ is isomorphic to its own dual.

\begin{proposition}
\label{An-definable}
The class $\{A_{n}\colon n\geq 3\}$ is \mso\dash definable.
\end{proposition}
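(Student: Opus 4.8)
The plan is to build $A_n$ up from $P_{n-1}$ using the transductions already assembled in Section~\ref{sect:operations}. We know from Proposition~\ref{Pn-definable} (with $K=2$) that the class $\{P_{n}\colon n\geq 2\}$ is \mso\dash definable. The matroid $A_n$ is obtained from $P_{n-1}$ by a cofree coextension by $f$ followed by a free extension by $e$; equivalently, $M\cong A_n$ if and only if there is a free element $e$ of $M$ and a free element $f$ of $(M\backslash e)^{*}$ such that $(M\backslash e)/f\cong P_{n-1}$ for some $n\geq 3$, i.e. such that $(M\backslash e)/f$ lies in $\{P_{m}\colon m\geq 2\}$. So the strategy is: first characterise "$(M\backslash e)/f$ belongs to a definable class'' by composing the relevant transductions, then impose the freeness conditions on $e$ and $f$ via the formulas $\formula{Free}$ and $\formula{Coloop}$ already defined in the excerpt.

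Concretely, first I would invoke Proposition~\ref{transduction-extensions}~(ii) to get a \cmso\dash transduction $\Lambda_{1}$ whose image $\mathcal{T}_{\Lambda_{1}}(M)$ is the set of matroids $M\backslash e$ for $e$ free in $M$. Next I need a transduction that performs a cofree coextension, i.e. whose image on $N$ is the set of matroids $N/f$ where $f$ is free in $N^{*}$. By Corollary~\ref{dual-definable} and the discussion preceding Proposition~\ref{transduction-extensions}, this is obtained by dualising, applying the free-deletion transduction $\Lambda_{1}$ (in its "delete a free element'' form, which matches Proposition~\ref{transduction-extensions}~(ii)), and dualising again; one can either argue this by composing the dual transduction of Example~\ref{exampledual} with $\Lambda_{1}$ and dualising once more, or — more cleanly — directly write a \cmso\dash transduction $\Lambda_{2}$ with \formula{Domain} asserting that $M$ is a matroid, $f$ is a singleton with $\formula{Free}[\{f\}]$ holding in the dual (using $\formula{Cocircuit}$, $\formula{Coflat}$, $\formula{Coloop}$, and the ``co''\dash versions of $\formula{Spanning}$ already available), and $Z$ is the complement of $f$; then \formula{NewElement} is $T\subseteq Z\land \formula{Sing}[T]$ and \formula{NewIndep} says $T\subseteq Z$ together with $T\cup\{f\}$ being independent, which encodes contraction. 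Since transductions compose (their defining formulas can be merged by taking conjunctions of the \formula{Domain}s, with disjoint blocks of $\mathcal{Z}$\dash variables, and appropriately chaining \formula{NewElement}/\formula{NewIndep}), let $\Lambda=\Lambda_{2}\circ\Lambda_{1}$ be a \cmso\dash transduction with $\mathcal{T}_{\Lambda}(M)$ equal to the set of matroids $(M\backslash e)/f$ ranging over $e$ free in $M$ and $f$ free in $(M\backslash e)^{*}$.

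Then, since $\{P_{m}\colon m\geq 2\}$ is \mso\dash definable (hence \cmso\dash definable), Corollary~\ref{BTTcorollary} gives a sentence $\psi$ that is satisfied by $M$ if and only if $\mathcal{T}_{\Lambda}(M)$ contains at least one matroid from $\{P_{m}\colon m\geq 2\}$ — that is, if and only if there exist a free element $e$ of $M$ and a free element $f$ of $(M\backslash e)^{*}$ with $(M\backslash e)/f\cong P_{m}$ for some $m\geq 2$. Finally I would take the conjunction of $\psi$ with \formula{Matroid} and with a sentence asserting $r(M)\geq 3$ (asserting, as in the proof of Proposition~\ref{Pn-definable}, the existence of a basis containing three pairwise distinct singletons), since rank\dash $3$ of $M$ forces $m=n-1\geq 2$ and conversely. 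A set-system satisfies the resulting sentence exactly when it is isomorphic to $A_{n}$ for some $n\geq 3$, as required.

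The main obstacle I anticipate is not conceptual but bookkeeping: making the composition $\Lambda_{2}\circ\Lambda_{1}$ of transductions precise. The excerpt defines transductions and the Backwards Translation Theorem but does not explicitly prove a composition lemma. One either needs to state and prove such a lemma (the image $E_{\rho}$ of $\Lambda_{1}$ consists of singletons since $\Lambda_{1}$ is a \cmso\dash transduction, so the elements of $M\backslash e$ are identified with elements of $E$, and one feeds the formulas of $\Lambda_{2}$ — after relativising every quantifier and the predicate $\indep{\cdot}$ through the $\Lambda_{1}$\dash translation using Theorem~\ref{BTT} — into a new triple), or one sidesteps composition entirely by writing a single transduction $\Lambda$ directly: \formula{Domain} asserts $M$ is a matroid, $Z_{1}=\{e\}$ is a free element, and $Z_{2}$ is a free element of $(M\backslash e)^{*}$ (all expressible with the ``co''\dash formulas and $\formula{Free}$, restricted to the ground set $E-Z_{1}$), while \formula{NewElement} picks out singletons in $E-Z_{1}-Z_{2}$ and \formula{NewIndep} says $T\subseteq E-Z_{1}-Z_{2}$ with $T\cup Z_{2}$ independent in $M\backslash e$, i.e. $T\cup Z_{2}$ independent in $M$. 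This direct construction avoids any composition machinery and is the route I would actually take in the write\dash up.
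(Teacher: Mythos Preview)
Your proposal is correct and follows the same overall strategy as the paper: characterise $A_n$ as a matroid from which deleting a free element $e$ and then contracting an element $f$ that is free in $(M\backslash e)^{*}$ yields some $P_m$ with $m\geq 2$, and express this via the transduction machinery together with Proposition~\ref{Pn-definable}.

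The paper's execution is slightly slicker than yours. Rather than building a new ``contract a dually-free element'' transduction or proving a composition lemma, it simply iterates the Backwards Translation lift three times using only the two transductions already on the shelf: the dual transduction $\Lambda_{1}$ of Example~\ref{exampledual} and the free-deletion transduction $\Lambda_{2}$ of Proposition~\ref{transduction-extensions}(ii). The resulting formula $((\varphi^{\Lambda_{2}})^{\Lambda_{1}})^{\Lambda_{2}}$ (existentially closed as in Corollary~\ref{BTTcorollary}) asserts that one can delete a free element, dualise, and delete another free element to reach $\{P_m\colon m\geq 2\}$; since $P_m=P_m^{*}$, this is equivalent to $(M\backslash e)/f\in\{P_m\colon m\geq 2\}$. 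So the paper trades your composition/direct-construction work for one appeal to self-duality of $P_m$. Both routes are valid; the paper's avoids exactly the bookkeeping obstacle you identified. Your added rank-$3$ clause is harmless but redundant: if $f$ is free in $(M\backslash e)^{*}$ then $f$ is not a loop of $M\backslash e$, so $r(M)=r(M\backslash e)=r((M\backslash e)/f)+1\geq 3$ automatically.
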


\begin{proof}
We rely on Proposition~\ref{transductionlist}~(i) and Proposition~\ref{transduction-extensions}~(ii).
Let $\Lambda_{1}$ be the transduction which takes each matroid to its dual, and let $\Lambda_{2}$ be the transduction which takes each matroid to the set of minors produced by deleting a single free element.
Let $\varphi$ be the sentence provided by Proposition~\ref{Pn-definable}, where a set-system will satisfy $\varphi$ if and only if it is isomorphic to $P_{n}$ for some $n\geq 2$.
Now $M=(E,\mathcal{I})$ satisfies the sentence $((\varphi^{\Lambda_{2}})^{\Lambda_{1}})^{\Lambda_{2}}$ if and only if $M$ is a  matroid and we can produce a matroid in $\{P_{n}\colon n\geq 2\}$ by first deleting a free element, then taking the dual, and then deleting another free element.
Because $P_{n}=P_{n}^{*}$ this is equivalent to saying that $M$ can be produced from some $P_{n}$ for $n\geq 2$ by a cofree coextension and then a free extension.
This is exactly what it means for $M$ to be isomorphic to $A_{n}$ for some $n\geq 3$.
\end{proof}

Let $n$ and $k$ be integers satisfying $n\geq k\geq 2$.
Then $B_{n,k}$ is the matroid $T_{n}(U_{n-1,n}\oplus U_{n-1,n}\oplus U_{k-1,k})$, where $T_{n}(M)$ denotes the truncation to rank $n$ of the matroid $M$.

\begin{proposition}
\label{Bn-prop}
A matroid $M$ is isomorphic to $B_{n,k}$ for some choice of $n$ and $k$ if and only if it has a non-spanning circuit $C$ such that $C$ is freely placed and coindependent, $|C|\geq 2$, and $M\backslash C$ is isomorphic to $P_{n}$ for some value $n\geq 2$.
\end{proposition}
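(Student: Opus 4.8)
The plan is to prove both implications by passing through an explicit description of circuits, using the standard fact that the circuits of a truncation $T_n(N)$ are exactly the circuits of $N$ of size at most $n+1$ together with the independent sets of $N$ of size exactly $n+1$.

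\textbf{Forward direction.} Suppose $M=B_{n,k}=T_n(N)$ where $N=U_{n-1,n}\oplus U_{n-1,n}\oplus U_{k-1,k}$, with the three direct-sum parts $A_1,A_2$ (of size $n$) and $C$ (of size $k$), and $n\geq k\geq 2$. First I would note $r(N)=2n+k-3\geq n$, so $r(B_{n,k})=n$, and then apply the truncation fact to compute that the circuits of $M$ are precisely $A_1$, $A_2$, $C$, and the $(n+1)$-element subsets $X$ with $A_1\nsubseteq X$, $A_2\nsubseteq X$, $C\nsubseteq X$; here $A_1,A_2,C$ are the non-spanning circuits (each of rank $<n$) and the $(n+1)$-subsets are the spanning circuits. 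From this: $|C|=k\geq 2$ and $C$ is non-spanning; $C$ is freely placed because any circuit meeting $C$ other than $C$ itself is one of the $(n+1)$-subsets, hence spanning (using that $A_1,A_2$ are disjoint from $C$); and $C$ is coindependent because $M\backslash C=T_n(N\backslash C)=T_n(U_{n-1,n}\oplus U_{n-1,n})=P_n$, which has rank $n=r(M)$, so any basis of $M\backslash C$ is a basis of $M$ disjoint from $C$. The interchange $M\backslash C=T_n(N\backslash C)$ I would justify by comparing rank functions on the common ground set $A_1\cup A_2$.

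\textbf{Backward direction.} Now suppose $M$ has a non-spanning circuit $C$ that is freely placed and coindependent with $|C|=k\geq 2$, and $M\backslash C\cong P_n$ with $n\geq 2$. Let $A_1,A_2$ be the two circuit-hyperplanes of $M\backslash C$; these are disjoint $n$-element sets partitioning $E(M)\setminus C$, and the non-spanning circuits of $M\backslash C$ are exactly $A_1,A_2$. Since $C$ is coindependent, $r(M)=r_M(E(M)\setminus C)=r(M\backslash C)=n$. I would then show the non-spanning circuits of $M$ are exactly $A_1,A_2,C$: a circuit of $M$ disjoint from $C$ is a circuit of $M\backslash C\cong P_n$, hence if non-spanning it is $A_1$ or $A_2$; a circuit meeting $C$ other than $C$ is spanning since $C$ is freely placed; and $A_1,A_2$ (circuit-hyperplanes) and $C$ are themselves non-spanning circuits of $M$. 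In particular the only circuits of $M$ of size at most $n$ are $A_1,A_2,C$, and since $C$ is a circuit with $r_M(C)=|C|-1<n$ we get $k\leq n$, so $n\geq k\geq 2$ and $B_{n,k}$ is defined. Finally, identifying $E(M)$ with $E(B_{n,k})$ by matching the parts (which have equal sizes $n$, $n$, $k$ in both), I would verify the circuits agree: an $n$-subset of $E(M)$ is dependent iff it equals $A_1$, equals $A_2$, or contains $C$ (since it is dependent iff it contains a circuit of size at most $n$); and an $(n+1)$-subset is a circuit iff all of its $n$-subsets are independent, which by the previous sentence happens iff it contains none of $A_1,A_2,C$ — exactly the circuit description of $B_{n,k}$ from the forward direction. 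Hence $M\cong B_{n,k}$.

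\textbf{Main obstacle.} The bookkeeping with truncation (the circuit description of $T_n(N)$ and the deletion/truncation interchange) is routine. The one genuinely delicate point I anticipate is the backward direction's passage from knowing the non-spanning circuits of $M$ to knowing all of them: one must argue carefully that an $(n+1)$-set is a circuit precisely when each of its $n$-subsets is independent, and that "freely placed" really does exclude every stray non-spanning circuit meeting $C$. The small deduction that non-spanning-ness of $C$ forces $k\leq n$ — which is also what makes $B_{n,k}$ well-defined — is easy to overlook but essential.
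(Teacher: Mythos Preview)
Your proposal is correct and follows essentially the same approach as the paper: in both directions you identify the non-spanning circuits of $M$ as exactly $A_{1},A_{2},C$ and then infer the isomorphism. The only cosmetic difference is that the paper finishes the backward direction by invoking the fact that a matroid is determined by its rank together with its collection of non-spanning circuits, whereas you unpack this fact by hand (checking the dependent $n$\dash sets and the $(n{+}1)$\dash circuits explicitly); your explicit verification is sound and amounts to reproving that general principle in this particular case.
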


\begin{proof}
Let $N_{1}$ and $N_{2}$ be isomorphic copies of $U_{n-1,n} $ on the ground sets $E_{1}$ and $E_{2}$, respectively.
Let $N_{3}$ be a copy of $U_{k-1,k}$ on $E_{3}$, where $E_{1}$, $E_{2}$, and $E_{3}$ are pairwise disjoint.
Note that $|E_{3}|=k\geq 2$.
Let $M$ be the truncation to rank $n$ of the direct sum $N_{1}\oplus N_{2}\oplus N_{3}$, so that $M$ is isomorphic to $B_{n,k}$.
Now $E_{3}$ is a circuit of $N_{1}\oplus N_{2}\oplus N_{3}$, so we can easily show that it is a circuit of $M$.
The non-spanning circuits of the rank\dash $n$ truncation are exactly the circuits with size at most $n$ in $N_{1}\oplus N_{2}\oplus N_{3}$, and these are exactly $E_{1}$, $E_{2}$, and $E_{3}$.
It follows that $E_{3}$ is freely placed.
Because $E_{1}\cup E_{2}$ contains a basis of $M$, it follows that $E_{3}$ is coindependent.
Moreover, $M\backslash E_{3}$ has exactly two non-spanning circuits, each of size $n$, and these circuits partition the ground set.
It follows easily that these circuits are also hyperplanes, and therefore $M\backslash E_{3}$ is isomorphic to $P_{n}$.

For the converse, assume that $C$ is a circuit of $M$ such that $C$ is freely placed and coindependent, $|C|\geq 2$, and $M\backslash C$ is isomorphic to $P_{n}$.
Let $k$ be the size of $C$.
Because $C$ is coindependent we see that $M\backslash C$ and $M$ both have rank $n$.
Let $(C_{1},C_{2})$ be the unique partition of $E(M)-C$ into two non-spanning circuits of $M\backslash C$.
The only non-spanning circuits of $M$ that do not intersect $C$ are $C_{1}$ and $C_{2}$.
The only non-spanning circuit of $M$ that does intersect $C$ is $C$ itself, because $C$ is freely placed.
Thus $C_{1}$, $C_{2}$, and $C$ are the only non-spanning circuits of $M$.
We can easily see that $T_{n}(U_{n-1,n}\oplus U_{n-1,n}\oplus U_{k-1,k})$ has the same rank as $M$ and the same set of non-spanning circuits (up to isomorphism).
Because a matroid is determined by its rank and its collection of non-spanning circuits, it now follows that $M$ is isomorphic to $B_{n,k}$.
\end{proof}

The next result follows immediately from Propositions~\ref{Bn-transduction}, \ref{Pn-definable}, and~\ref{Bn-prop}.

\begin{corollary}
\label{Bn-definable}
The class $\{B_{n,k}\colon n\geq k\geq 2\}$ is \mso\dash definable.
\end{corollary}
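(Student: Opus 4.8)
The plan is to read off the result by composing the transduction of Proposition~\ref{Bn-transduction} with the definability of the class $\{P_{n}\colon n\geq 2\}$ from Proposition~\ref{Pn-definable}, and then using the structural characterisation of Proposition~\ref{Bn-prop} to check that this composition captures exactly the matroids $B_{n,k}$. In other words, the whole argument is an instance of the ``at least one'' half of Corollary~\ref{BTTcorollary}, with the three cited propositions supplying, respectively, the transduction, the definable target class, and the dictionary between the two.

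Concretely, first I would apply Proposition~\ref{Pn-definable} with $K=2$ to obtain an \mso\dash sentence $\varphi$ that a set-system satisfies if and only if it is isomorphic to $P_{n}$ for some $n\geq 2$. Next I would let $\Lambda$ be the \cmso\dash transduction furnished by Proposition~\ref{Bn-transduction}, so that for every set-system $M$ the image $\mathcal{T}_{\Lambda}(M)$ is nonempty only when $M$ is a matroid with at least one freely placed, coindependent, non-spanning circuit having at least two elements, and in that case $\mathcal{T}_{\Lambda}(M)$ consists precisely of the matroids $M\backslash C$, as $C$ ranges over all such circuits. Since a \cmso\dash transduction is in particular an \mso\dash transduction and $\varphi$ is an \mso\dash sentence, Corollary~\ref{BTTcorollary} then yields that the class
\[
\mathcal{B}=\{M\ \text{a set-system}\colon M_{\rho}\ \text{satisfies}\ \varphi\ \text{for at least one}\ M_{\rho}\in\mathcal{T}_{\Lambda}(M)\}
\]
is \mso\dash definable.

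It remains to identify $\mathcal{B}$ with $\{B_{n,k}\colon n\geq k\geq 2\}$. Unwinding the definitions, a set-system $M$ lies in $\mathcal{B}$ exactly when $M$ is a matroid possessing a freely placed, coindependent, non-spanning circuit $C$ with $|C|\geq 2$ such that $M\backslash C$ is isomorphic to $P_{n}$ for some $n\geq 2$; by Proposition~\ref{Bn-prop} this condition is equivalent to $M$ being isomorphic to $B_{n,k}$ for some $n\geq k\geq 2$. Hence $\mathcal{B}=\{B_{n,k}\colon n\geq k\geq 2\}$, and this class is \mso\dash definable.

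I do not expect any genuine obstacle: all the substantive work has been pushed into the three cited propositions, and what is left is bookkeeping --- matching the outputs of $\Lambda$ against the characterisation of $B_{n,k}$ and invoking Corollary~\ref{BTTcorollary}. The only point meriting a line of care is confirming that applying the Backwards Translation machinery to a \cmso\dash transduction together with a plain \mso\dash sentence produces an \mso\dash (not merely \cmso\dash) sentence, which is exactly what Corollary~\ref{BTTcorollary} asserts, and that the target class $\{P_{n}\colon n\geq 2\}$ is read isomorphism-closed, as it must be for any definable class.
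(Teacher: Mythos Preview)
Your proposal is correct and is essentially the same as the paper's own proof, which simply states that the result follows immediately from Propositions~\ref{Bn-transduction}, \ref{Pn-definable}, and~\ref{Bn-prop}. You have spelled out in detail the bookkeeping that the paper leaves implicit, including the invocation of Corollary~\ref{BTTcorollary} and the observation that a \cmso\dash transduction is in particular an \mso\dash transduction, but the underlying argument is identical.
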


Let $n\geq 4$ be an integer.
Let $M$ be a matroid with distinct elements $e$ and $f$ such that $e$ is free in $M$, $f$ is a coloop in $M\backslash e$, and $M\backslash \{e,f\}$ is isomorphic to $P_{n-1}$.
In this case $M$ is the matroid $D_{n}$.
The next result follows from Propositions~\ref{transduction-extensions} and~\ref{Pn-definable}.

\begin{proposition}
\label{Dn-definable}
The class $\{D_{n}\colon n\geq 4\}$ is \mso\dash definable.
\end{proposition}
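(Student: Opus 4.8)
The plan is to mimic the structure of the proof of Proposition~\ref{An-definable}, expressing the construction of $D_{n}$ as a sequence of transductions applied to the class $\{P_{n}\colon n\geq 2\}$, which is \mso\dash definable by Proposition~\ref{Pn-definable}. The matroid $D_{n}$ is obtained from $P_{n-1}$ by first adjoining a coloop $f$ and then performing a free extension by $e$. Reading this backwards: $M$ is isomorphic to some $D_{n}$ ($n\geq 4$) exactly when we can delete a free element of $M$ to obtain a matroid $M\backslash e$, then delete a coloop of $M\backslash e$ to obtain a matroid isomorphic to $P_{n-1}$ for some $n-1\geq 3$. (One should note that deleting a coloop is the inverse of adjoining a coloop, and that adjoining a coloop to a rank\dash$(n-1)$ matroid raises the rank to $n$, so the rank bookkeeping is consistent.)

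First I would invoke Proposition~\ref{transduction-extensions}~(i) to obtain the \cmso\dash transduction $\Lambda_{1}$ whose image $\mathcal{T}_{\Lambda_{1}}(M)$ is the set of matroids produced from $M$ by deleting a coloop, and Proposition~\ref{transduction-extensions}~(ii) to obtain the \cmso\dash transduction $\Lambda_{2}$ whose image $\mathcal{T}_{\Lambda_{2}}(M)$ is the set of matroids produced from $M$ by deleting a free element. Next, let $\varphi$ be the sentence provided by Proposition~\ref{Pn-definable}, taken with $K=3$, so that a set-system satisfies $\varphi$ if and only if it is isomorphic to $P_{n}$ for some $n\geq 3$. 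By the Backwards Translation Theorem (Theorem~\ref{BTT}) and the packaging in Corollary~\ref{BTTcorollary}, each application of a transduction to a defining sentence again produces a defining sentence: applying Corollary~\ref{BTTcorollary} to $\varphi$ and $\Lambda_{1}$ yields a sentence $\varphi^{\Lambda_{1}}$ true of $M$ exactly when some matroid obtained from $M$ by deleting a coloop is isomorphic to $P_{n}$ for some $n\geq 3$ (using the ``at least one'' form); applying it again with $\Lambda_{2}$ yields $(\varphi^{\Lambda_{1}})^{\Lambda_{2}}$, true of $M$ exactly when $M$ is a matroid from which one can delete a free element $e$ and then delete a coloop of $M\backslash e$ to reach some $P_{n}$ with $n\geq 3$.

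Finally I would argue that the sentence $(\varphi^{\Lambda_{1}})^{\Lambda_{2}}$ defines $\{D_{n}\colon n\geq 4\}$. For the forward direction, if $M\cong D_{n}$ then by definition $M$ has a free element $e$ with $M\backslash e$ having a coloop $f$ and $M\backslash\{e,f\}\cong P_{n-1}$, and $n-1\geq 3$, so $M$ satisfies the sentence. For the converse, suppose $M$ satisfies the sentence; then there is a free element $e$ of $M$ and a coloop $f$ of $M\backslash e$ with $(M\backslash e)\backslash f = M\backslash\{e,f\}$ isomorphic to $P_{m}$ for some $m\geq 3$; setting $n=m+1\geq 4$, this is precisely the definition of $D_{n}$. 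The main obstacle, such as it is, is purely one of careful composition: one must check that the free variables of the two transductions do not clash (they can be renamed freely, as remarked in the construction of \cmso\dash formulas) and that the ``blurring'' identification of ground sets made in Proposition~\ref{transduction-extensions} is consistent through both steps, but these are the same routine verifications carried out in the proof of Proposition~\ref{An-definable}, and no genuinely new difficulty arises.
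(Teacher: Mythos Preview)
Your proposal is correct and takes exactly the approach the paper intends: the paper's proof consists of the single sentence ``The next result follows from Propositions~\ref{transduction-extensions} and~\ref{Pn-definable},'' and your argument is a careful unpacking of precisely that, composing the coloop-deletion and free-element-deletion transductions with the defining sentence for $\{P_{n}\colon n\geq 3\}$ via Corollary~\ref{BTTcorollary}. The bookkeeping you do with $K=3$ and the order of composition is right, and mirrors the $A_{n}$ argument as you note.
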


The rank-three matroid $R_{3}$ has ground set $\{1,2,\ldots, 7\}$.
Its non-spanning circuits are $\{1,2\}$, $\{3,4\}$, and any set in the family
\[
\{\{5,x,y\}\colon x\in\{1,2\}, y\in \{3,4\}\}.
\]
Recall that the rank-three wheel $\mathcal{W}_{3}$ is isomorphic to $K_{4}$, and the rank-three whirl $\mathcal{W}^{3}$ is obtained from $M(\mathcal{W}_{3})$ by declaring a circuit-hyperplane to be a basis.
Both $M(\mathcal{W}_{3})$ and $\mathcal{W}^{3}$ are self-dual.
Now we can state Bonin's characterisation of lattice-path matroids.

\begin{theorem}[\cite{MR2718679}*{Theorem~3.1}]
\label{LP-excluded-minors}
The excluded minors for the class of lattice-path matroids are:
\begin{enumerate}[label = \textup{(\roman*)}]
\item $A_{n}$ for $n\geq 3$,
\item $B_{n,k}$ and $B_{n,k}^{*}$ for $n\geq k\geq 2$,
\item $D_{n}$ and $D_{n}^{*}$ for $n\geq 4$,
\item $M(\mathcal{W}_{3})$, $\mathcal{W}^{3}$, $R_{3}$, and $R_{3}^{*}$.
\end{enumerate}
\end{theorem}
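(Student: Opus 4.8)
This is Theorem~3.1 of \cite{MR2718679}, so strictly speaking the plan is to cite it rather than to reprove it; nonetheless let me indicate the shape that an independent argument would take. As with any excluded-minor characterisation there are two halves. The first is to verify that each matroid appearing in (i)--(iv) really is an excluded minor for the class of lattice-path matroids: it is itself not a lattice-path matroid, but every one of its proper minors is. The second is to prove that the list is exhaustive, i.e.\ that any matroid which is minor-minimal among the matroids that are not lattice-path matroids already occurs on it.

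For the first half, non-membership in the class is detected by a structural obstruction for lattice-path matroids --- for instance the fact that a connected lattice-path matroid admits a nested transversal presentation, which in turn tightly constrains the way its non-spanning circuits may overlap. The four sporadic matroids $M(\mathcal{W}_{3})$, $\mathcal{W}^{3}$, $R_{3}$, and $R_{3}^{*}$ are disposed of by direct computation. For the infinite families one argues parametrically: $A_{n}$, $B_{n,k}$, and $D_{n}$ are each built from the self-dual lattice-path matroid $P_{n}$ of Proposition~\ref{Pn-definable} by the reductions studied earlier in the paper (a free extension, a cofree coextension, the addition of a coloop, or the adjunction of a freely placed coindependent circuit), so that a single deletion or contraction from any member of one of these families returns $P_{n'}$ or a small modification of it, which is a lattice-path matroid; and since the class of lattice-path matroids is closed under duality, the starred families $B_{n,k}^{*}$ and $D_{n}^{*}$ follow automatically from the unstarred ones.

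The completeness half is the genuine obstacle. Because the class is closed under direct sums (as well as under minors), a minor-minimal non-lattice-path matroid is connected, and the argument then proceeds by a delicate case analysis on its connectivity together with its collection of non-spanning circuits, successively peeling off free elements, coloops, and freely placed circuits --- precisely the reductions underlying Propositions~\ref{transduction-extensions} and~\ref{Bn-transduction} --- until the matroid is forced into one of the families built around a copy of $P_{n}$ or into one of the four sporadic excluded minors. This analysis is intricate and constitutes the bulk of \cite{MR2718679}, so here we take Theorem~\ref{LP-excluded-minors} as a black box. It is then combined with Proposition~\ref{definable-excluded}, the closure of \mso\dash definability under duality (Corollary~\ref{dual-definable}), Corollary~\ref{finite-class}, and the definability of the individual families $\{A_{n}\}$, $\{B_{n,k}\}$, and $\{D_{n}\}$ (Propositions~\ref{An-definable} and~\ref{Dn-definable} and Corollary~\ref{Bn-definable}) to conclude that the class of lattice-path matroids is \mso\dash definable.
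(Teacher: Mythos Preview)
Your proposal is correct: the paper does not prove this theorem at all but simply cites it from \cite{MR2718679} and uses it as a black box, exactly as you recognise in your opening sentence. The sketch you add of how Bonin's argument proceeds and the summary of how the theorem feeds into Theorem~\ref{LP-definable} are accurate extras, but no comparison with the paper's own proof is possible since none is given.
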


\begin{theorem}
\label{LP-definable}
The class of lattice path matroids is \mso\dash definable.      
\end{theorem}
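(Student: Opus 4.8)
The plan is to assemble this from the machinery already in place, with Bonin's excluded-minor characterisation (Theorem~\ref{LP-excluded-minors}) as the bridge. The key observation is that the class of lattice-path matroids is minor-closed (this is cited, via~\cite{MR2215428}*{Theorem~3.1}), so Proposition~\ref{definable-excluded} reduces the task to showing that the \emph{class of excluded minors} for lattice-path matroids is \mso\dash definable. By Theorem~\ref{LP-excluded-minors}, that class is the union of six families:
\[
\{A_{n}\colon n\geq 3\},\quad \{B_{n,k}\},\quad \{B_{n,k}^{*}\},\quad \{D_{n}\},\quad \{D_{n}^{*}\},\quad \{M(\mathcal{W}_{3}),\mathcal{W}^{3},R_{3},R_{3}^{*}\}.
\]

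First I would invoke the definability results already established for the individual families: $\{A_{n}\colon n\geq 3\}$ is \mso\dash definable by Proposition~\ref{An-definable}; $\{B_{n,k}\colon n\geq k\geq 2\}$ is \mso\dash definable by Corollary~\ref{Bn-definable}; and $\{D_{n}\colon n\geq 4\}$ is \mso\dash definable by Proposition~\ref{Dn-definable}. Next I would pass to the dual families: by Corollary~\ref{dual-definable}, the \mso\dash definability of $\{B_{n,k}\}$ and of $\{D_{n}\}$ yields the \mso\dash definability of $\{B_{n,k}^{*}\}$ and $\{D_{n}^{*}\}$ respectively. The remaining family $\{M(\mathcal{W}_{3}),\mathcal{W}^{3},R_{3},R_{3}^{*}\}$ is finite, hence \mso\dash definable by Corollary~\ref{finite-class}.

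Then I would finish by combining: a finite union of \mso\dash definable classes is \mso\dash definable by Proposition~\ref{prop:operationsonclasses}~(ii), so the class of excluded minors for the lattice-path matroids is \mso\dash definable. Applying the ``if'' direction of Proposition~\ref{definable-excluded} to this minor-closed class then gives that the class of lattice-path matroids is itself \mso\dash definable. I do not anticipate a genuine obstacle here, since every ingredient has been prepared; the only point requiring a moment's care is to confirm that Proposition~\ref{definable-excluded} is applicable, i.e.\ that lattice-path matroids really do form a minor-closed class, and to note that the decomposition of the excluded-minor class into the six listed families is exactly the content of Theorem~\ref{LP-excluded-minors}.
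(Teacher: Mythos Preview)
Your proposal is correct and follows essentially the same route as the paper's own proof: reduce via Proposition~\ref{definable-excluded} to definability of the excluded-minor class, invoke Theorem~\ref{LP-excluded-minors} to enumerate that class, apply Propositions~\ref{An-definable} and~\ref{Dn-definable} and Corollary~\ref{Bn-definable} together with Corollary~\ref{dual-definable} for the infinite families, Corollary~\ref{finite-class} for the finite one, and Proposition~\ref{prop:operationsonclasses}~(ii) to take the union. There is nothing to add.
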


\begin{proof}
By Proposition~\ref{definable-excluded} it suffices to show that the set of excluded minors for the class of lattice-path matroids is \mso\dash definable.
Propositions~\ref{An-definable}, \ref{Bn-definable}, and~\ref{Dn-definable} along with Corollary~\ref{dual-definable} imply that the classes in (i), (ii), and (iii) of Theorem~\ref{LP-excluded-minors} are \mso\dash definable.
The class of four matroids in (iv) is \mso\dash definable by Corollary~\ref{finite-class}.
Now the union of these four classes is \mso\dash definable by Proposition~\ref{prop:operationsonclasses}~(ii).
\end{proof}

\subsection{Spikes}

The class of spikes has played a central role in matroid theory since their introduction in~\cite{MR1399683}.
We define them in the following way.
Let $r\geq 3$ be an integer.
Let $L_{1},\ldots, L_{r}$ be a collection of pairwise disjoint subsets,
each of cardinality two.
We consider a rank\dash $r$ matroid on the ground set $\cup_{i=1}^{r}L_{i}$.
We call the sets $L_{1},\ldots, L_{r}$ the \emph{legs} of the matroid.
Every set of the form $L_{i}\cup L_{j}$ (where $i\ne j$) is a circuit.
Any non-spanning circuit that is not of this form must be a circuit-hyperplane that intersects each leg in exactly one element
(but there may not be any such circuits).
If these conditions are satisfied then the matroid is a \emph{(tipless) spike}.

\begin{proposition}
The class of spikes is \mso\dash definable.
\end{proposition}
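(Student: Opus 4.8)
The plan is to exhibit a defining \mso\dash sentence as a disjunction of two pieces: one for tipless spikes of rank at least $5$, and one for the finitely many tipless spikes of smaller rank. Since a tipless spike of rank $r$ has exactly $2r$ elements, the tipless spikes of rank $3$ and $4$ form a finite class (a subclass of the matroids on at most $8$ elements), which is \mso\dash definable by Corollary~\ref{finite-class}. By Proposition~\ref{prop:operationsonclasses}~(ii) it then suffices to produce an \mso\dash sentence $\varphi$ satisfied by a matroid $M$ exactly when $M$ is a tipless spike of rank at least $5$.

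The crucial point is that the legs are not part of the matroid structure and must be recovered logically. In a tipless spike $M$ of rank $r\ge 5$, the circuits of size $4$ are precisely the sets $L_i\cup L_j$ with $i\ne j$, since a transversal circuit-hyperplane has size $r\ge 5$, a spanning circuit has size $r+1\ge 6$, and every non-spanning circuit is of one of these two kinds. Because $M$ has at least two legs other than $L_i$, the intersection of all four-element circuits through a fixed element of $L_i$ equals $L_i$. I would therefore introduce a formula $\formula{SameLeg}[X,Y]$ asserting that $X$ and $Y$ are distinct singletons, that $X$ lies in some four-element circuit, and that every four-element circuit containing $X$ also contains $Y$; in a rank-$\ge 5$ tipless spike this holds precisely when $X$ and $Y$ lie in a common leg, and in particular it picks out, for each element, its unique leg-mate. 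The sentence $\varphi$ then asserts: (i) $M$ is a matroid (via \formula{Matroid}) of rank at least $5$ (e.g.\ there exist five pairwise distinct singletons with independent union); (ii) the relation ``$X=Y$ or $\formula{SameLeg}[X,Y]$'' on singletons is symmetric and each singleton has exactly one $\formula{SameLeg}$-mate, so that its classes partition $E$ into two-element legs; (iii) for all singletons $X,Y$ with $\neg\formula{SameLeg}[X,Y]$, the union of the leg of $X$ with the leg of $Y$ is a circuit; and (iv) every non-spanning circuit is either such a union of two legs or a hyperplane meeting each leg in exactly one element. These are all routine to express using \sing{\cdot}, \formula{Circuit}, \formula{Hyperplane}, and the leg-mate relation, since ``$C$ has exactly four elements'' and ``$C$ meets each leg in exactly one element'' are straightforward over singletons.

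Because the definition of a tipless spike requires the rank to equal the number of legs, I would add one more clause to $\varphi$: that some basis meets each leg in exactly one element. Such a transversal basis has size equal to the number of legs, so it forces $r(M)$ to equal that number; and conversely every tipless spike has one, because not every leg-transversal can be a circuit-hyperplane (any two transversal circuit-hyperplanes differ on at least two legs, so they cannot exhaust the $2^r$ leg-transversals), and a leg-transversal that is not a circuit-hyperplane is independent and hence, having size equal to the rank, a basis. With this clause present, correctness is immediate in both directions: if $M\models\varphi$ then the $\formula{SameLeg}$-classes are legs witnessing that $M$ meets the definition of a tipless spike of rank at least $5$, and conversely every such spike satisfies each clause of $\varphi$ by the structural facts above. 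Finally, the class of all tipless spikes is the union of the class defined by $\varphi$ with the finite class of rank-$3$ and rank-$4$ spikes, so it is \mso\dash definable by Proposition~\ref{prop:operationsonclasses}~(ii).

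I expect the main obstacle to be exactly this logical recovery of the legs. The clean ``intersection of four-element circuits'' description of a leg works only once the rank is large enough to separate the four-element leg unions from transversal circuit-hyperplanes and from spanning circuits, which is what forces the split into a large-rank sentence and a finite small-rank class; and one must still take care to pin down the rank of the matroid in terms of the number of legs.
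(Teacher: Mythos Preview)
Your proof is correct and follows essentially the same strategy as the paper: dispose of the finitely many spikes of rank at most four via Corollary~\ref{finite-class}, and for rank at least five recover the legs from the four\dash element circuits (the paper uses ``$X$ is the intersection of two $4$\dash circuits'' where you use ``every $4$\dash circuit through $x$ contains $y$'', but these pick out the same pairs) before axiomatising the spike structure in \mso. Your additional clause requiring a leg\dash transversal basis is a genuine refinement that the paper's sketch omits: without it the restriction of a free rank\dash $(k{+}1)$ tipless spike to $k$ of its legs (a rank\dash $(k{+}1)$ matroid on $2k$ elements whose non\dash spanning circuits are exactly the $L_i\cup L_j$) would satisfy all the remaining conditions while having rank unequal to the number of legs, so your extra care here is warranted.
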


\begin{proof}
A spike with rank three or four has at most eight elements, so there are only finitely many such spikes.
Corollary~\ref{finite-class} implies that there is an \mso\dash sentence
$\varphi_{\leq 4}$ that defines this subclass of spikes.
Hence we need only consider spikes with rank at least five.
This constraint means that the only circuits of size four are the unions of two distinct legs.
We define the \mso\dash formula \formula{Leg}[X] so that it is satisfied when $X$ has cardinality two and there exist two circuits $C_{1}$ and $C_{2}$ such that $|C_{1}|=|C_{2}|=4$ and $X=C_{1}\cap C_{2}$.
We refer to the sets that satisfy \formula{Leg} as \emph{legs}.
Now we construct the sentence \formula{Spike} to be the conjunction of $\varphi_{\leq 4}$ with a sentence saying that every singleton set is contained in exactly one leg, that every union of two distinct legs is a circuit, and that every non-spanning circuit not of this form is a circuit-hyperplane that intersects each leg in exactly one element.
Thus \formula{Spike} is satisfied exactly by the tipless spikes.
\end{proof}

We will now consider the smallest minor-closed class of matroids that contains all spikes.
Let this class be denoted by $\mathcal{S}$.
Problem 3.11 in~\cite{MR4224059} conjectures that $\mathcal{S}$ has only finitely many excluded minors.
In this section we prove the weaker result that $\mathcal{S}$ is \mso\dash definable.

The class $\mathcal{S}$ is fairly simple to describe explicitly.
The easiest method for doing so involves \emph{lift matroids}, as introduced by Zaslavsky~\cite{MR1088626}.
Let $G$ be a graph (which may contain loops and parallel edges).
A \emph{theta subgraph} consists of two distinct vertices joined by three paths with no internal vertices in common.
A \emph{linear class} is a collection $\mathcal{B}$ of cycles with the property that no theta subgraph contains exactly two cycles in $\mathcal{B}$.
Any cycle contained in $\mathcal{B}$ is \emph{balanced} and any other cycle is \emph{unbalanced}.
If a subgraph contains only unbalanced cycles then it is \emph{contrabalanced}.

Consider a matroid with the set of edges of $G$ as its ground set.
The circuits of this matroid are the subsets of edges that correspond to:
\begin{enumerate}[label = \textup{(\roman*)}]
\item balanced cycles,
\item contrabalanced theta subgraphs, or
\item a pair of edge-disjoint unbalanced cycles that have at most one vertex in common.
\end{enumerate}
We write $L(G,\mathcal{B})$ to denote this matroid and we say that $L(G,\mathcal{B})$ is a \emph{lift matroid}.
If $k_{G}$ is the number of connected components of $G$ that have no unbalanced cycles then the rank of $L(G,\mathcal{B})$ is $|V(G)|-k_{G}$.
%A \emph{tight handcuff} is a pair of edge-disjoint cycles that have exactly one vertex in common.
%A \emph{bracelet} is a pair of vertex-disjoint cycles.

Let $\mathcal{G}$ be the class of graphs containing:
\begin{enumerate}[label=\textup{(\roman*)}]
\item any connected graph with exactly two vertices and at most four edges joining them,
\item any graph whose underlying simple graph is a cycle of at least three
vertices, where each parallel class contains at most two edges.
\end{enumerate}
Every spike is a lift matroid of the form $L(G,\mathcal{B})$, where $G\in \mathcal{G}$ is a loopless graph with at least three vertices, every edge is in a parallel class of size two, and $\mathcal{B}$ is a linear class of Hamiltonian cycles.

Our next result characterises the matroids in $\mathcal{S}$.
It is a consequence of~\cite{MR4224059}*{Proposition~3.5}.

\begin{lemma}
\label{spike-minors}
Let $M$ be a matroid.
Then $M$ belongs to $\mathcal{S}$ if and only if one of the following statements holds.
\begin{enumerate}[label = \textup{(\roman*)}]
\item $M = L(G,\mathcal{B})$, where $G\in \mathcal{G}$ has at least
three vertices and $\mathcal{B}$ is a linear class of Hamiltonian cycles,
\item $M = L(G,\mathcal{B})$, where $G\in \mathcal{G}$ has exactly two vertices and $\mathcal{B}$ is a collection of pairwise edge-disjoint Hamiltonian cycles, 
\item $M=M(G)$ for some graph $G\in \mathcal{G}$,
\item $M=M^{*}(G)$ for some graph $G\in \mathcal{G}$,
\item every connected component of $M$ has size at most two, or
\item $M$ has rank at most one.
\end{enumerate}
\end{lemma}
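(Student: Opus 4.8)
The plan is to derive the lemma from~\cite{MR4224059}*{Proposition~3.5}, which identifies the matroids in $\mathcal{S}$; the remaining work is to re-express that identification in the explicit form (i)--(vi), and I would carry this out by verifying two inclusions directly. Write $\mathcal{L}$ for the union of the six families. Since $\mathcal{S}$ is the \emph{smallest} minor-closed class of matroids containing every spike, it is enough to prove, first, that every matroid in $\mathcal{L}$ is a minor of some spike (so $\mathcal{L}\subseteq\mathcal{S}$), and second, that $\mathcal{L}$ is itself minor-closed and contains every spike (so $\mathcal{S}\subseteq\mathcal{L}$).

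For $\mathcal{L}\subseteq\mathcal{S}$ I would work family by family, using the fact recorded just before the lemma that every spike is a lift matroid $L(G,\mathcal{B})$ in which $G\in\mathcal{G}$ is loopless with at least three vertices, every parallel class has size two, and $\mathcal{B}$ is a linear class of Hamiltonian cycles. The lift matroids of (i) and (ii) are recovered from suitable large spikes by deletions (thinning parallel classes) and contractions (merging vertices and cutting down the linear class); the graphic matroids of (iii) arise by contracting a spike until its linear class becomes empty and its graph balanced; the cographic matroids of (iv) then follow from (iii) because the class of spikes is closed under duality, so $\mathcal{S}$ is self-dual; and the matroids of (v) and (vi) occur as minors of any sufficiently large spike. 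Checking that these constructions realise \emph{every} member of each family, rather than special ones, is exactly the point at which one leans on~\cite{MR4224059}*{Proposition~3.5}.

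The crux, and the step I expect to be the main obstacle, is showing that $\mathcal{L}$ is minor-closed. Here I would use Zaslavsky's deletion and contraction rules for lift matroids. Deletion is easy: $L(G,\mathcal{B})\backslash e=L(G\backslash e,\mathcal{B}')$ for the induced linear class $\mathcal{B}'$. For contraction there are two regimes: if $e$ lies on a cycle of $\mathcal{B}$ then $L(G,\mathcal{B})/e$ is again a lift matroid $L(G/e,\mathcal{B}'')$, whereas if every cycle through $e$ is unbalanced (in particular if $e$ is an unbalanced loop) the lift structure collapses and $L(G,\mathcal{B})/e$ is the graphic matroid of a minor of $G$. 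One then checks, case by case, that for $G$ in either part of $\mathcal{G}$ these operations return a graph that, after discarding loops, again lies in $\mathcal{G}$, or else degenerate to cases (v) or (vi); and that the surviving part of the linear class either remains a family of Hamiltonian cycles (keeping us in (i) or (ii)) or shrinks far enough to land us in (ii), (iii), (v), or (vi). The cographic family (iv) is handled by dualising, using $(M(G)/e)^{*}=M^{*}(G)\backslash e$ and $(M(G)\backslash e)^{*}=M^{*}(G)/e$. The bookkeeping is routine but lengthy; the genuinely delicate points are the transitions between the Hamiltonian-linear-class regime, the purely graphic regime, and the two-vertex regime, and confirming that the bound on parallel-class sizes persists under contraction. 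Combining the two inclusions gives $\mathcal{S}=\mathcal{L}$, which is the statement of the lemma.
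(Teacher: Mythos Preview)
The paper's proof is a single sentence: it states that the lemma is a consequence of~\cite{MR4224059}*{Proposition~3.5} and gives no further argument. Your proposal invokes the same citation, so at the level of approach you agree with the paper.

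Where you diverge is in then sketching a substantial independent argument---two inclusions, Zaslavsky's deletion/contraction rules for lift matroids, a family-by-family case analysis---none of which the paper carries out. This extra material is not wrong, and your sketch is broadly sound (the delicate transitions you flag are indeed the places where care is needed), but there is an internal tension in your plan: if Proposition~3.5 already ``identifies the matroids in $\mathcal{S}$'', then the remaining work is to match its description against the list (i)--(vi), not to re-prove minor-closure of $\mathcal{L}$ from scratch. As written, your proposal hedges between citing the result and proving it independently; the paper simply cites it.
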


\begin{lemma}
\label{lem:spikeclass}
Let $M$ be a matroid.
Conditions \textup{(A)} and \textup{(B)} are equivalent.
\begin{enumerate}[label=\textup{(\Alph*)}]
\item There exists a graph $G\in\mathcal{G}$ such that $G$ has at least five vertices and at least three parallel pairs and $M=L(G,\mathcal{B})$ (where $\mathcal{B}$ is a linear class of Hamiltonian cycles).
\item There exist disjoint subsets $P$ and $S$ such that if we define $Z$ to be $E(M)-(P\cup S)$ then the following conditions hold:
\begin{enumerate}[label = \textup{(\roman*)}]
\item $r(M)\geq 5$ and $|Z|\geq 6$,
\item if $|C|=2$ then $C$ is a circuit if and only if $C\subseteq P$,
\item for every $z\in Z$ there exists a unique element $z^{*}\in Z-z$ such that there are at least two $4$\dash element circuits of $M|Z$ that contain $\{z,z^{*}\}$, and furthermore
\begin{enumerate}[label = \textup{(\alph*)}]
\item there exists a basis $B$ of $M$ such that $S\subseteq B\subseteq S\cup Z$ and $B\cap \{z,z^{*}\}=1$ for every $z\in Z$,
\item $\{z,z^{*},p\}$ is a circuit for any $z\in Z$ and any $p\in P$,
\item if $z_{1}$ is in $Z$ and $z_{2}$ is in $Z-\{z_{1},z_{1}^{*}\}$, then $\{z_{1},z_{1}^{*},z_{2},z_{2}^{*}\}$ is a circuit, and
\end{enumerate}
\item if $C$ is a non-spanning circuit then either:
\begin{enumerate}[label = \textup{(\alph*)}]
\item $|C|=2$ and $C\subseteq P$,
\item $C=\{z,z^{*},p\}$ for some $z\in Z$ and some $p\in P$,
\item $C=\{z_{1},z_{1}^{*},z_{2},z_{2}^{*}\}$ for some $z_{1}\in Z$ and $z_{2}\in Z-\{z_{1},z_{1}^{*}\}$, or
\item $C$ is a circuit-hyperplane satisfying $S\subseteq C\subseteq S\cup Z$ and $|C\cap\{z,z^{*}\}|=1$ for every $z\in Z$.
\end{enumerate}
\end{enumerate}
\end{enumerate}
\end{lemma}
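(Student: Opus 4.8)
The plan is to prove the two implications separately, in each case translating between the graph data $(G,\mathcal{B})$ appearing in (A) and the set data $(P,S,Z)$ appearing in (B). Two facts will be used throughout: the description of the circuits of a lift matroid $L(G,\mathcal{B})$ recalled before the statement, and the principle invoked in the proof of Proposition~\ref{Bn-prop} that a matroid is determined by its rank together with its collection of non-spanning circuits.

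\textbf{The implication (A)$\Rightarrow$(B).} Since $G$ has at least five vertices it must be of the second type in the definition of $\mathcal{G}$: its underlying simple graph is a cycle on $n=|V(G)|$ vertices, certain edges are doubled, and certain vertices carry one or two loops. As $G$ has at least three parallel pairs it has at least three digons, each necessarily unbalanced (a balanced cycle lies in $\mathcal{B}$ and so has length $n\ge 3$); hence $G$ is connected with an unbalanced cycle, so $r(M)=n\ge 5$. I would set $P$ to be the set of loops, let $Z$ be the union of the $m\ge 3$ digons (these being the pairs $\{z,z^{*}\}$), and put $S=E(M)-(P\cup Z)$. Running through the three kinds of circuit of $L(G,\mathcal{B})$ then gives (B): a two-element circuit is a pair of edge-disjoint unbalanced cycles, hence a pair of loops, which yields (ii); a three-element circuit is a loop together with a digon, which yields (iii)(b) and (iv)(b); the four-element circuits of $M|Z$ are exactly the unions of two digons, which yields (iii)(c) and (iv)(c) and, by counting how many such circuits meet a given element, pins down $z^{*}$ uniquely, giving the rest of (iii). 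A theta subgraph of $G$ consists of a digon together with the remaining arc of the cycle, hence has $n+1$ edges, and any pair consisting of a loop and a Hamiltonian cycle is likewise a circuit of size $n+1$; so these are spanning, and the only further non-spanning circuits are the balanced Hamiltonian cycles, which are circuit-hyperplanes of the shape in (iv)(d). Finally, applying the linear-class hypothesis to the theta through any digon produces an unbalanced Hamiltonian cycle, which is an independent transversal of the digons together with $S$, that is, a basis of the kind required in (iii)(a); the cardinality assertions in (i) are then immediate.

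\textbf{The implication (B)$\Rightarrow$(A).} Write $m=|Z|/2\ge 3$. Conditions (ii)--(iv) say exactly that the non-spanning circuits of $M$ are the two-subsets of $P$, the triples $\{z,z^{*},p\}$, the quadruples $\{z_{1},z_{1}^{*},z_{2},z_{2}^{*}\}$, and a (possibly empty) family of circuit-hyperplanes $C$ with $S\subseteq C\subseteq S\cup Z$ meeting each pair once; and (iii)(a) forces $r(M)=|S|+m$. I would then build $G$ by taking a cycle on $|S|+m$ vertices in an arbitrary cyclic order, doubling $m$ of its edges so that the $i$th digon consists of the two elements of the $i$th pair $\{z_{i},z_{i}^{*}\}$, identifying the remaining $|S|$ edges with the elements of $S$, and attaching the elements of $P$ as loops; and I would let $\mathcal{B}$ be the family of those circuit-hyperplanes of $M$, each of which corresponds to a Hamiltonian cycle of $G$. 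One checks that $\mathcal{B}$ is a linear class --- two such circuit-hyperplanes cannot differ in a single leg, since a circuit-hyperplane is a flat and its closure would otherwise have to absorb the whole leg --- so $L(G,\mathcal{B})$ is a well-defined matroid, and comparing against the three circuit rules shows that it has rank $|S|+m$ and precisely the non-spanning circuits listed above; hence $M=L(G,\mathcal{B})$. Since the underlying simple graph of $G$ is a cycle on $r(M)\ge 5$ vertices, every parallel class of $G$ has size at most two, and $G$ has $m\ge 3$ parallel pairs, we obtain $G\in\mathcal{G}$ with the features demanded by (A).

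\textbf{Expected main obstacle.} I anticipate that the bulk of the work lies in the reverse direction: first the careful verification that the three circuit rules for $L(G,\mathcal{B})$ generate no circuit beyond those in (iv) --- this is manageable because in such a graph the only cycles are loops, digons and Hamiltonian cycles, so thetas and pairs of edge-disjoint cycles are highly constrained --- and second, confirming that the reconstructed $G$ genuinely belongs to $\mathcal{G}$, in particular that the elements of $P$ can be realised as loops placed at most two to a vertex. Most of the forward direction, by contrast, becomes bookkeeping once the translation sending $P$ to the loops, $Z$ to the doubled edges, $S$ to the single edges, and $\mathcal{B}$ to the balanced Hamiltonian cycles is in place.
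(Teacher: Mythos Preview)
Your proposal is correct and follows essentially the same route as the paper: in the forward direction you set $P$ to be the loops, $Z$ the doubled edges, and $S$ the single non-loop edges, and read off the conditions from the lift-matroid circuit description; in the reverse direction you rebuild $G$ as a cycle on $|S|+m$ vertices with $m$ digons and the elements of $P$ attached as loops, take $\mathcal{B}$ to be the circuit-hyperplanes of type~(iv)(d), verify linearity via the observation that two such circuit-hyperplanes differing in a single leg would contradict flatness, and conclude by matching rank and non-spanning circuits. The only cosmetic differences are that the paper simply places all of $P$ as loops at a single vertex rather than distributing them, and justifies the existence of an unbalanced Hamiltonian cycle by a counting remark rather than your explicit theta-subgraph argument; neither affects the structure of the proof.
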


\begin{proof}
Assume that (A) holds, so that $M=L(G,\mathcal{B})$, where $G\in \mathcal{G}$ has at least five vertices and at least three parallel pairs and where $\mathcal{B}$ is a linear class of Hamiltonian cycles.
We set $P$ to be the set of loops of $G$ and we set $S$ to be the set of non-loop edges that are not in parallel pairs.
Next we set $Z$ to be $E(M)-(P\cup S)$.
It is easy to check that conditions (i) and (ii) in (B) hold.
Note that if $z$ is in $Z$ then $z$ is a non-loop edge that is in a parallel pair.
We set $z^{*}$ to be the edge parallel to $z$.
There are at least two $4$\dash element circuits of $M|Z$ containing $\{z,z^{*}\}$ because there are at least three parallel pairs in $Z$.
Because $\mathcal{B}$ is a linear class and there are at least three parallel pairs, we can certainly find a Hamiltonian cycle that is not in $\mathcal{B}$.
Such a Hamiltonian cycle will satisfy (iii)(a).
The remaining conditions are easy to verify, using the definition of circuits of lift matroids.

For the converse we assume that $M$ satisfies the conditions in (B).
Assume that $z$ is an element of $Z$.
Then there is a unique element $z^{*}\in Z-z$ such that there are at least two $4$\dash element circuits contained in $Z$ that contain $\{z,z^{*}\}$.
There is also a unique element $z^{**}$ such that there are at least two $4$\dash element circuits contained in $Z$ that contain $\{z^{*},z^{**}\}$.
But $z$ is such an element, so it follows that $z=z^{**}$.
Now it follows that condition (iii) partitions $Z$ into $2$\dash element subsets, $\{a_{i},b_{i}\}_{i=1}^{k}$, where $b_{i}=a_{i}^{*}$ for each $i$.
Note that $|Z|=2k$ and $k\geq 3$ by condition (i).
	
Next we construct the graph $G$.
We start with a cycle with edge-set equal to $S\cup\{a_{1},\ldots, a_{k}\}$.
We add each edge $b_{i}$ so that it is parallel to $a_{i}$.
Finally we add $P$ as a set of loops incident with an arbitrary vertex.
Assume that $C$ is a non-spanning circuit of $M$ that satisfies (iv)(d).
Then $C$ is a Hamiltonian cycle of $G$.
Let $\mathcal{B}$ be the class of Hamiltonian cycles arising in this way.
If $\mathcal{B}$ is not a linear class then there exists a theta subgraph that contains exactly two cycles in $\mathcal{B}$.
This theta subgraph necessarily comprises a Hamiltonian cycle $C$ that contains $a_{i}$ for some $i$ and another Hamiltonian cycle of the form $(C-a_{i})\cup b_{i}$.
Thus $C$ and $(C-a_{i})\cup b_{i}$ are two circuit-hyperplanes of $M$.
This implies that $C$ is not a flat of $M$ so we have a contradiction.
Therefore $\mathcal{B}$ is a linear class of $M$.

Condition (iii)(a) implies that there is a basis of $M$ corresponding to a Hamiltonian cycle of $G$.
This basis contains at least five edges by (i), so $G$ has at least five vertices and at least three parallel pairs.
Note that any cycle $\{a_{i},b_{i}\}$ of $G$ is unbalanced.
Because there is at least one unbalanced cycle it follows that the rank of $L(G,\mathcal{B})$ is equal to the number of vertices in $G$, which is equal to the number of edges in a Hamiltonian cycle.
Thus $M$ and $L(G,\mathcal{B})$ have the same rank.
By applying (iv) and the definition of circuits in a lift matroid, we can
check that $M$ and $L(G,\mathcal{B})$ have exactly the same non-spanning circuits, so they are identical matroids.
\end{proof}

\begin{theorem}
\label{thm:spike-definable}
The class $\mathcal{S}$ is \mso\dash definable.
\end{theorem}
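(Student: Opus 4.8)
The plan is to derive the theorem from the structural description in Lemma~\ref{spike-minors} together with the closure properties of \mso\dash definable classes. That lemma presents $\mathcal{S}$ as the union of the six classes in its items~(i)--(vi), so by Proposition~\ref{prop:operationsonclasses}~(ii) it is enough to show that each of these six classes is \mso\dash definable. Items~(v) and~(vi) are immediate: ``every connected component of $M$ has size at most two'' is expressed by $\formula{Matroid}$ together with $\forall X\,(\formula{Component}[X]\to(\formula{Empty}[X]\lor\formula{Sing}[X]\lor\exists Y\exists Z\,(\formula{Sing}[Y]\land\formula{Sing}[Z]\land Y\ne Z\land\formula{Union}[Y,Z,X])))$, and ``$M$ has rank at most one'' is expressed by $\formula{Matroid}$ together with the assertion that every two\dash element set is dependent. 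So the remaining work is to handle the lift\dash matroid classes of items~(i) and~(ii) and the graphic and cographic classes of items~(iii) and~(iv).

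For item~(i) the essential point is already contained in Lemma~\ref{lem:spikeclass}: condition~(B) of that lemma is visibly an \mso\dash formula (every cardinality occurring in it is a fixed constant, so no modular counting predicate is required), and by that lemma it defines exactly the part of item~(i) coming from graphs $G\in\mathcal{G}$ with at least five vertices and at least three parallel pairs. What remains of item~(i) comes from graphs $G\in\mathcal{G}$ --- necessarily of the cyclic type, as they have at least three vertices --- with at most four vertices or with at most two parallel pairs. In the first case the ground set of $L(G,\mathcal{B})$ has bounded size (once the number of loops permitted by the definition of $\mathcal{G}$ is taken into account), so only finitely many matroids occur and Corollary~\ref{finite-class} applies; the same reasoning disposes of the two\dash vertex lift matroids of item~(ii). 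In the second case $G$ is a long cycle carrying at most two parallel pairs, hence at most four Hamiltonian cycles and only boundedly many candidate linear classes $\mathcal{B}$, and for each choice of $\mathcal{B}$ the matroid $L(G,\mathcal{B})$ is pinned down by an explicit \mso\dash sentence describing its circuits, written in the style of Lemma~\ref{lem:spikeclass}; the disjunction over these finitely many cases is the required sentence.

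For item~(iv) it suffices to treat item~(iii), since Corollary~\ref{dual-definable} then supplies a defining sentence for the class of duals. For item~(iii) I would split $\mathcal{G}$ into its two types. The two\dash vertex graphs of type~(i) produce matroids of rank at most one, so that part is already subsumed by item~(vi). For the graphs of type~(ii), $M(G)$ has the form $M(G_{0})\oplus L$, where $L$ is the set of matroid loops and $M(G_{0})$ arises from some $U_{n-1,n}$ by adjoining a parallel copy to a subset of its elements; this class is described by an \mso\dash sentence asserting that $M$ is a matroid, that $L$ is its set of loops, and that the circuits of $M$ of size at least two are either two\dash element parallel pairs partitioning a subset of $E-L$, or ``long'' circuits obtained by taking one member of each such pair together with every remaining element of $E-L$. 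Combining this with the earlier items gives an \mso\dash sentence for each of the six classes of Lemma~\ref{spike-minors}, and Proposition~\ref{prop:operationsonclasses}~(ii) then produces an \mso\dash sentence defining their union, namely $\mathcal{S}$.

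The main obstacle I anticipate is not any individual definability step --- Lemma~\ref{lem:spikeclass} already performs the hardest translation, and the remaining ones are variations on it or direct appeals to Corollary~\ref{finite-class} --- but the bookkeeping of the full case analysis: one must check that the listed sub\dash cases genuinely exhaust items~(i)--(iv), and one must be careful with loops of $G$, which are invisible in the underlying simple graph of a member of $\mathcal{G}$ yet appear as matroid loops in $M(G)$ and as a parallel\dash type structure in $L(G,\mathcal{B})$; getting the intended reading of $\mathcal{G}$ right and reflecting it faithfully in each \mso\dash formula is where the care lies.
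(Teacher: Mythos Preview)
Your overall strategy matches the paper's exactly: decompose $\mathcal{S}$ via Lemma~\ref{spike-minors}, show each of the six classes is \mso\dash definable, and take the union via Proposition~\ref{prop:operationsonclasses}~(ii). Your treatments of items~(v) and~(vi), of the ``at least five vertices and at least three parallel pairs'' sub\dash case of item~(i) via Lemma~\ref{lem:spikeclass}, and of item~(iv) by duality all line up with the paper. For item~(iii) you give a direct circuit description, whereas the paper instead tweaks the simplification transduction of Example~\ref{examplesimple} and applies the Backwards Translation Theorem to the sentence defining $\{U_{n-1,n}\colon n\ge 3\}$; both routes are fine.

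There is, however, a genuine gap exactly where your final paragraph worries there might be. You dispose of the three-- and four--vertex sub\dash cases of item~(i), and of all of item~(ii), by asserting that only finitely many matroids arise and invoking Corollary~\ref{finite-class}. That is false: graphs in $\mathcal{G}$ may carry arbitrarily many loops, so the ground sets of the associated lift matroids are unbounded. The paper's own proof makes this explicit---in the four-vertex sub\dash case of item~(i) the loop set $P$ is allowed to have any size, and for item~(ii) the proof begins by noting that $M$ ``may have an arbitrary number of loops'' and that two of its parallel classes ``may have arbitrary size.'' So Corollary~\ref{finite-class} does not apply here. The paper handles each of these sub\dash cases by writing an explicit \mso\dash sentence that quantifies over the partition into $P$, the non-loop parallel classes, and (where relevant) $S$, and then pins down the non-spanning circuits; you would need to do the same rather than appeal to finiteness.
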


\begin{proof}
We will, in turn, consider the classes of matroids from Lemma~\ref{spike-minors}.
Start by considering matroids of the form $L(G,\mathcal{B})$, where $G\in \mathcal{G}$ has at least three vertices.
Consider the case that $G$ has four vertices.
We can assert that the ground set is partitioned into the sets $X_{1}$, $X_{2}$, $X_{3}$, and $X_{4}$ (corresponding to the parallel classes of $G$), and $P$ (corresponding to the set of loops) so that there exists a basis consisting of a single element from each of $X_{1}$, $X_{2}$, $X_{3}$, and $X_{4}$.
We insist that $1\leq |X_{i}|\leq 2$ for each $i$ but we allow $P$ to be empty.
We furthermore assert that the following sets are circuits.
\begin{enumerate}[label = \textup{(\roman*)}]
\item $2$\dash element subsets of $P$,
\item $X_{i}\cup Y$ where $|X_{i}|=2$ and $Y$ is a singleton subset of $P$, and
\item $X_{i}\cup X_{j}$, where $i\ne j$ and $|X_{i}|=|X_{j}|=2$.
\end{enumerate}
We also require that any non-spanning circuit is either one of these sets, or is a circuit-hyperplane that contains a single element from each of $X_{1}$, $X_{2}$, $X_{3}$, and $X_{4}$.
This sentence characterises matroids of the form $L(G,\mathcal{B})$ where $G$ has four vertices.
It is clear that we can repeat this exercise when $G$ has three vertices, so now we will examine the case that $G$ has at least five vertices.

Consider the case that $G$ has at least five vertices and exactly two parallel pairs.
We assert that the ground set is partitioned into sets $S$, $X_{1}$, $X_{2}$, and $P$, where $|X_{1}|=|X_{2}|=2$ (and $P$ may be empty but $S$ must contain at least three elements).
We require that there is a basis containing $S$ and a single element from each of $X_{1}$ and $X_{2}$, and that the following sets are circuits:
\begin{enumerate}[label = \textup{(\roman*)}]
\item $2$\dash element subsets of $P$,
\item $X_{i}\cup Y$ where $|X_{i}|=2$ and $Y$ is a singleton subset of $P$, and
\item $X_{1}\cup X_{2}$.
\end{enumerate}
We impose the condition that any non-spanning circuit is one of these sets or is a circuit-hyperplane consisting of $S$ along with a single element from each of $X_{1}$ and $X_{2}$.
It is clear that we can make alterations that deal with the case that $G$ has fewer than two parallel pairs.

Now we will consider the matroids that satisfy condition (A) of Lemma~\ref{lem:spikeclass}.
We will construct a sentence that asserts that there exist disjoint subsets $P$ and $S$ so that when $Z$ is the complement of $P\cup S$ the conditions in (B) hold.
We can assert that there exists an independent union of five distinct singleton sets and that there exists a union of six distinct singleton sets in $Z$.
Therefore we can impose condition (B)(i).
It is obvious that we can construct a formula that imposes condition (B)(ii).
Let $\formula{Pair}[X,Y]$ be constructed so that it is satisfied when $X$ and $Y$ are distinct singleton subsets of $Z$ and there exist at least two $4$\dash element circuits contained in $Z$ that contain both $X$ and $Y$.
Now we will assert that for every singleton subset $X$ of $Z$ there is a unique subset $Y$ such that $\formula{Pair}[X,Y]$ is satisfied.
Certainly we can construct formulas corresponding to all the conditions in (B)(iii) and (B)(iv).
This argument shows that there is an \mso\dash sentence that defines the class of matroids satisfying (A) from Lemma~\ref{lem:spikeclass}.
Now we are done with the class of matroids satisfying (i) from Lemma~\ref{spike-minors}.

Consider an arbitrary matroid $M$ in class (ii) from Lemma~\ref{spike-minors}.
We see that $r(M)=2$ and that $M$ may have an arbitrary number of loops, but at most six parallel classes.
There are at most two parallel classes corresponding to unbalanced loops incident with the two vertices, so two of these parallel classes may have arbitrary size.
If $M$ has exactly three non-trivial parallel classes then it has no more than five parallel classes, and at least one of them has cardinality two.
(This corresponds to $\mathcal{B}$ containing exactly one Hamiltonian cycle.)
If $M$ has four non-trivial parallel classes, then it has exactly four parallel classes, and at least two of them have cardinality two.
These conditions characterise the matroids in class (ii), and it is clear that they can all be stated in \mso.

We skip ahead and show that we can define the classes in (v) and (vi).
For (v) we can state that whenever $X$ is the union of three pairwise distinct singleton sets, there is no set that contains $X$ and satisfies the formula \formula{Component}.
For (vi) all we need do is require that any union of two distinct singleton sets satisfies \formula{Dep}.

Now we are left only with the classes (iii) and (iv).
If we can show that class (iii) is \mso\dash definable, then (iv) will follow by Corollary~\ref{dual-definable}.
By the previous paragraph, we need only consider matroids in (iii) with rank at least two.
These are the graphic matroids of the form $M(G)$ where $G\in \mathcal{G}$ has at least three vertices, which is to say the matroids with no parallel class of size three or more whose canonical simplification is isomorphic to $U_{n-1,n}$ for some $n\geq 3$.
First we note that there is a sentence $\varphi$ that defines the class $\{U_{n-1,n}\colon n\geq 3\}$.
(We could, for example, construct $\varphi$ so that it asserts the ground set has size at least three and whenever $C$ satisfies \formula{Circuit}, every singleton set is a subset of $C$.)
Next we will tweak the transduction in Example~\ref{examplesimple}.
We alter \formula{Domain} so that it is the conjunction of \formula{Matroid} with a sentence saying that whenever $X$ is the union of three distinct singleton sets then $X$ does not satisfy \formula{Parallel}.
Thus \formula{Domain} is satisfied by exactly the matroids that have no parallel classes with three or more elements.
The other formulas in the transduction are exactly as in Example~\ref{examplesimple}.
Let $\Lambda$ be the transduction that we obtain in this way.
Neither $\varphi$ nor \formula{Domain} has any free variables, so when we apply Theorem~\ref{BTT} to obtain $\varphi^{\Lambda}$ we produce a formula with no free variables: that is, a sentence.
This sentence will be satisfied exactly by the matroids with no parallel class of size three or more whose canonical simplification is isomorphic to $U_{n-1,n}$ for some $n\geq 3$, which is precisely what we desired.
This disposes of cases (iii) and (iv) and shows that Theorem~\ref{thm:spike-definable} holds.
\end{proof}

\begin{bibdiv}

\begin{biblist}

\bib{MR3776760}{article}{
   author={Boja\'{n}czyk, Miko\l aj},
   author={Pilipczuk, Micha\l },
   title={Definability equals recognizability for graphs of bounded
   treewidth},
   conference={
      title={Proceedings of the 31st Annual ACM-IEEE Symposium on Logic in
      Computer Science (LICS 2016)},
   },
   book={
      publisher={ACM, New York},
   },
   date={2016},
   pages={10}
}

\bib{MR2718679}{article}{
   author={Bonin, Joseph E.},
   title={Lattice path matroids: the excluded minors},
   journal={J. Combin. Theory Ser. B},
   volume={100},
   date={2010},
   number={6},
   pages={585--599}
}

\bib{MR2215428}{article}{
   author={Bonin, Joseph E.},
   author={de Mier, Anna},
   title={Lattice path matroids: structural properties},
   journal={European J. Combin.},
   volume={27},
   date={2006},
   number={5},
   pages={701--738}
}

\bib{MR2018421}{article}{
   author={Bonin, Joseph},
   author={de Mier, Anna},
   author={Noy, Marc},
   title={Lattice path matroids: enumerative aspects and Tutte polynomials},
   journal={J. Combin. Theory Ser. A},
   volume={104},
   date={2003},
   number={1},
   pages={63--94}
}

\bib{MR1042649}{article}{
   author={Courcelle, Bruno},
   title={The monadic second-order logic of graphs. I. Recognizable sets of
   finite graphs},
   journal={Inform. and Comput.},
   volume={85},
   date={1990},
   number={1},
   pages={12--75}
}

\bib{MR2962260}{book}{
   author={Courcelle, Bruno},
   author={Engelfriet, Joost},
   title={Graph structure and monadic second-order logic},
   series={Encyclopedia of Mathematics and its Applications},
   volume={138},
   note={A language-theoretic approach;
   With a foreword by Maurice Nivat},
   publisher={Cambridge University Press, Cambridge},
   date={2012},
   pages={xiv+728}
}

\bib{MR4444152}{article}{
   author={Funk, Daryl},
   author={Mayhew, Dillon},
   author={Newman, Mike},
   title={Tree automata and pigeonhole classes of matroids: I},
   journal={Algorithmica},
   volume={84},
   date={2022},
   number={7},
   pages={1795--1834}
}

\bib{MR4395073}{article}{
   author={Funk, Daryl},
   author={Mayhew, Dillon},
   author={Newman, Mike},
   title={Defining bicircular matroids in monadic logic},
   journal={Q. J. Math.},
   volume={73},
   date={2022},
   number={1},
   pages={65--92}
}

\bib{MR2066603}{article}{
   author={Hlin\v{e}n\'{y}, Petr},
   title={Branch-width, parse trees, and monadic second-order logic for
   matroids (extended abstract)},
   conference={
      title={STACS 2003},
   },
   book={
      series={Lecture Notes in Comput. Sci.},
      volume={2607},
      publisher={Springer, Berlin},
   },
   date={2003},
   pages={319--330}
}

\bib{MR2081597}{article}{
   author={Hlin\v{e}n\'{y}, Petr},
   title={On matroid properties definable in the MSO logic},
   conference={
      title={Mathematical foundations of computer science 2003}},
   book={
      series={Lecture Notes in Comput. Sci.},
      volume={2747},
      publisher={Springer, Berlin}},
   date={2003},
   pages={470--479}
}

\bib{MR3803151}{article}{
   author={Mayhew, Dillon},
   author={Newman, Mike},
   author={Whittle, Geoff},
   title={Yes, the `missing axiom' of matroid theory is lost forever},
   journal={Trans. Amer. Math. Soc.},
   volume={370},
   date={2018},
   number={8},
   pages={5907--5929}
}

\bib{MR4224059}{article}{
   author={Mayhew, Dillon},
   author={Newman, Mike},
   author={Whittle, Geoff},
   title={Fractal classes of matroids},
   journal={Adv. in Appl. Math.},
   volume={126},
   date={2021},
   pages={Paper No. 101995, 27}
}

\bib{MR1399683}{article}{
   author={Oxley, James},
   author={Vertigan, Dirk},
   author={Whittle, Geoff},
   title={On inequivalent representations of matroids over finite fields},
   journal={J. Combin. Theory Ser. B},
   volume={67},
   date={1996},
   number={2},
   pages={325--343}
}

\bib{MR1088626}{article}{
	author={Zaslavsky, Thomas},
	title={Biased graphs. II. The three matroids},
	journal={J. Combin. Theory Ser. B},
	volume={51},
	date={1991},
	number={1},
	pages={46--72}
}

\end{biblist}

\end{bibdiv}

\end{document}